\documentclass[letterpaper, 10 pt, conference]{ieeeconf}
\IEEEoverridecommandlockouts
\overrideIEEEmargins   

\usepackage{cite}
\usepackage{amsmath,amssymb,amsfonts}
\usepackage{algorithmic}
\usepackage{graphicx}
\usepackage{textcomp}
\usepackage{xcolor}
\usepackage{amsthm}

\def\BibTeX{{\rm B\kern-.05em{\sc i\kern-.025em b}\kern-.08em
    T\kern-.1667em\lower.7ex\hbox{E}\kern-.125emX}}
\newtheorem{theorem}{Theorem}[]
\newtheorem{lemma}[theorem]{Lemma}
\newtheorem{proposition}[theorem]{Proposition}
\newtheorem{corollary}[theorem]{Corollary}
\theoremstyle{definition}
\newtheorem*{remark}{Remark}
    
\begin{document}

\title{The Weighted Markov-Dubins Problem\\
\thanks{$^{1}$ Deepak Prakash Kumar and Swaroop Darbha are with the Department of Mechanical Engineering, Texas A\&M University, College Station, TX 77843, USA (e-mail: {\tt\small deepakprakash1997@gmail.com, dswaroop@tamu.edu}).\\
$^{2}$ Satyanarayana Gupta Manyam is with the Infoscitex Corp., 4027 Col Glenn Hwy, Dayton, OH 45431, USA (e-mail: {\tt\small msngupta@gmail.com})\\
$^{3}$ David Casbeer is with the Autonomous Control Branch, Air Force Research Laboratory, Wright-Patterson Air Force Base, OH 45433 USA (e-mail:
{\tt\small david.casbeer@afresearchlab.com})\\
Distribution Statement A. Approved for public release, distribution unlimited. Case Number: AFRL-2022-3975.}}

\author{Deepak Prakash Kumar$^{1}$, Swaroop Darbha$^{1}$, Satyanarayana Gupta Manyam$^{2}$, David Casbeer$^{3}$}


\maketitle

\begin{abstract}
In this article, a variation of the classical Markov-Dubins problem is considered, which deals with curvature-constrained least-cost paths in a plane with prescribed initial and final configurations, different bounds for the sinistral and dextral curvatures, and penalties $\mu_L$ and $\mu_R$ for the sinistral and dextral turns, respectively. The addressed problem generalizes the classical Markov-Dubins problem and the asymmetric sinistral/dextral Markov-Dubins problem.
The proposed formulation can be used to model an Unmanned Aerial Vehicle (UAV) with a penalty associated with a turn due to a loss in altitude while turning or a UAV with different costs for the sinistral and dextral turns due to hardware failures or environmental conditions. Using optimal control theory, the main result of this paper shows that the optimal path belongs to a set of at most $21$ candidate paths, each comprising of at most five segments. Unlike in the classical Markov-Dubins problem, the $CCC$ path, which is a candidate path for the classical Markov-Dubins problem, is not optimal for the weighted Markov-Dubins problem. Moreover, the obtained list of candidate paths for the weighted Markov-Dubins problem reduces to the standard $CSC$ and $CCC$ paths and the corresponding degenerate paths when $\mu_L$ and $\mu_R$ approach zero.
\end{abstract}


\section{Introduction}

Autonomous vehicles, which includes UAVs, have increasing military and civilian applications, thereby increasing the need for path planning for various scenarios that the vehicles might encounter. Path planning for UAVs is typically addressed by modeling the vehicle as a Dubins vehicle, wherein the vehicle travels forward at a constant speed and has a constraint on the curvature of the path. The classical Markov-Dubins problem, which addresses the curvature-constrained planar paths of shortest length connecting given initial and final configurations, was solved in \cite{Dubins}. For this problem, the optimal paths were obtained as $CSC$ and $CCC$ paths and degenerate paths of the same. Here, $C \in\{ L, R\}$ represents a sinistral ($L$) or a dextral ($R$) turn, respectively, of minimum turning radius, and $S$ represents a straight line segment. A variation of the Markov-Dubins problem was studied in \cite{Reeds_Shepp}, wherein the vehicle was allowed to move forwards or backward. Such a vehicle is referred to as a Reeds-Shepp vehicle.

Results in \cite{Dubins} and \cite{Reeds_Shepp} were proved without resorting to Pontryagin's Minimum Principle (PMP). The Markov-Dubins problem and the path planning for the Reeds-Shepp vehicle were approached using PMP \cite{PMP} to characterize the optimal path systematically in \cite{sussman_geometric_examples} and \cite{boissonat}, respectively. Recently, in \cite{phase_portrait_kaya}, the Markov-Dubins problem was approached using both PMP and phase portraits to simplify the proofs further. Using the obtained candidate paths through one of the previously discussed approaches, the path synthesis problem is then addressed to determine the optimal path given initial and final configurations. The path synthesis problem for the Markov-Dubins problem has been addressed in \cite{Shortest_path_synthesis_Boissonat, dubins_classification}, while \cite{path_synthesis_reeds_shepp} addresses the same problem for the Reeds-Shepp vehicle.

In the literature, various extensions/variations of the Markov-Dubins problem have been studied. For example, \cite{monroy} and \cite{sussman_3D} investigate the Markov-Dubins problem on a Riemann manifold and 3D, respectively. In \cite{dubins_circle}, the shortest curvature constrained paths for a UAV to pursue a target moving on a circle is considered. The asymmetric sinistral/dextral Markov-Dubins problem is addressed in \cite{sinistral/dextral}, wherein the bound on the sinistral and dextral curvatures need not be the same. This study obtained the same set of candidate paths as the classical Markov-Dubins problem, with the difference arising in the path synthesis. While the motivation of the study was to plan optimal paths for UAV with hardware failures or environmental conditions, considering different bounds for the sinistral and dextral curvatures cannot completely capture the preference of the UAV to take a turn in a particular direction. Hence, a weighted Markov-Dubins problem is addressed in this study, wherein different bounds for the sinistral and dextral curvatures, and penalties $\mu_L$ and $\mu_R$ are considered for sinistral and dextral turns, respectively. Furthermore, for UAVs with the same bound on the sinistral and dextral curvatures, the weighted Markov-Dubins framework that penalizes the turns accounts for the loss of altitude of the vehicle during a turn. 

From this study using Pontryagin's minimum principle, a total of $21$ candidate optimal paths are obtained, wherein each path has at most five segments. Moreover, when $\mu_L = \mu_R = 0,$ the candidate paths reduce to $CSC$ and $CCC$ paths and corresponding degenerate paths. Therefore, the proposed weighted Markov-Dubins problem generalizes the classical Markov-Dubins problem and the asymmetric sinistral/dextral Markov-Dubins problem.

The rest of the article is organized as follows. In Section~II, the problem formulation for the weighted Markov-Dubins problem is presented. In Section~III, the candidate paths for the least-cost path connecting given initial and final configurations are identified using PMP and phase portraits. In Section~IV, a typical case for the classical Markov-Dubins problem is analyzed without and with penalties. Finally, the paper is concluded in Section~V.

\section{Problem Formulation}

In this paper, the minimum cost path problem for a curvature-constrained planar vehicle with different sinistral and dextral curvatures with given initial and final configurations is considered. In contrast to \cite{sinistral/dextral}, penalties $\mu_L$ and $\mu_R$ are allocated to sinistral and dextral turns, respectively. Consider a bounded control input $u (t) \in U = [-U_R, U_L]$, which controls the rate of change of the vehicle's heading angle. Therefore, the kinematic equations for the vehicle are given by
\begin{align} \label{eq: kinematic_equations}
    \dot{x} (t) = \cos{\alpha (t)}, \, \dot{y} (t) = \sin{\alpha (t)}, \, \dot{\alpha} (t) = u (t),
\end{align}
where $x, y$ are the Cartesian coordinates for the vehicle, and $\alpha$ denotes the heading angle of the vehicle. It should be noted here that if $u > 0,$ the vehicle takes a sinistral (left) turn and if $u < 0,$ the vehicle takes a dextral (right) turn.

Equation~\ref{eq: kinematic_equations} can alternatively be written in terms of two control inputs $u_L (t) \in [0, U_L]$ and $u_R (t) \in [0, U_R],$ which denote the rate of change of the vehicle's heading angle corresponding to left and right turns, respectively.
Consider weights $\mu_L, \mu_R$ that penalize the left and right turns, respectively, such that $\mu_L, \mu_R \geq 0$ and at least one of the penalties is non-zero, i.e., $\mu_L+ \mu_R > 0.$ The minimum cost path problem can be formulated as
\begin{align} \label{eq: objective_functional}
    \min \int_{0}^{t_f} (1 + \mu_L u_L (t) + \mu_R u_R (t)) dt,
\end{align}
subject to
\begin{align}
    \dot{x} (t) = \cos{\alpha (t)}, \, \dot{y} (t) &= \sin{\alpha (t)}, \, \dot{\alpha} (t) = u_L (t) - u_R (t), \label{eq: kinematic_equations_updated} \\
    \left(u_L (t), u_R (t) \right) &\in [0, U_L] \times [0, U_R], \label{eq: range_controls} \\
    x (0) = x_i, \, y (0) &= y_i, \, \alpha (0) = \alpha_i, \label{eq: initial_configuration} \\
    x (t_f) = x_f, \, y (t_f) &= y_f, \, \alpha (t_f) = \alpha_f. \label{eq: final_configuration}
\end{align}
In the above formulation, $t_f$ is the final free time, \eqref{eq: kinematic_equations_updated} represents the modified kinematics equation in \eqref{eq: kinematic_equations} with $u_L$ and $u_R$ as the control inputs, and \eqref{eq: range_controls} represents the range of the control inputs. Moreover, \eqref{eq: initial_configuration} and \eqref{eq: final_configuration} denote the boundary conditions, which are the given initial and final configurations, respectively.


\section{Characterization of the Optimal Paths}

In this section, the optimal path types for the presented problem formulation will be derived using Pontryagin's minimum principle (PMP).
Declaring $e, p, q,$ and $\beta$ to be the adjoint variables associated with the integrand in \eqref{eq: objective_functional} and the three kinematic constraints in \eqref{eq: kinematic_equations_updated}, respectively, the Hamiltonian $H$
is given by
\begin{align}
\begin{split}
    H &= e (1 + \mu_L u_L (t) + \mu_R u_R (t)) + p (t) \cos{\alpha (t)} \\
    & \quad\, + q (t) \sin{\alpha (t)} + \beta (t) \left(u_L (t) - u_R (t) \right),
\end{split}
\end{align}
where the dependence of $H$ on the states $x, y,$ and $\alpha,$ the adjoint variables, and the control inputs
is not shown for brevity. The rate of change of adjoint variables is given by
\begin{align}
\begin{split}
    \dot{p} (t) &= -\frac{\partial H}{\partial x (t)} = 0, \quad \dot{q} (t) = -\frac{\partial H}{\partial y (t)} = 0, \\
    \dot{\beta} (t) &= -\frac{\partial H}{\partial \alpha (t)} = p(t) \sin{\alpha (t)} -q (t) \cos{\alpha (t)}.
\end{split}
\end{align}
Defining $\lambda = \sqrt{p^2 + q^2}$ and $\tan{\phi} = \frac{q}{p},$ the Hamiltonian and the equations for the adjoint variables can be written as
\begin{align}
\begin{split} \label{eq: Hamiltonian_updated_expression}
    H &= e + \left(\beta (t) + \mu_L e \right) u_L (t) + \left(\mu_R e - \beta (t) \right) u_R (t) \\
    & \quad\, + \lambda \cos{\left(\alpha (t) - \phi \right)},
\end{split} \\
    p &= \lambda \cos{\phi}, \, q = \lambda \sin{\phi}, \, \dot{\beta} (t) = \lambda \sin{\left(\alpha (t) - \phi \right)}. \label{eq: adjoint_equations}
\end{align}
It should be noted that from PMP, $e$ is a constant and $e \geq 0$. Moreover, for the problem formulation presented, the optimal control actions correspond to $H \equiv 0$ \cite{PMP_lecture_notes}.

\begin{lemma} \label{lemma: optimal_control_actions}
The optimal control actions are given by $(u_L, u_R) \in \mathcal{U} = \{(0, 0), (U_L, 0), (0, U_R) \}$.
\end{lemma}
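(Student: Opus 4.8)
\emph{Proof idea.} The plan is to exploit the fact that, by \eqref{eq: Hamiltonian_updated_expression}, the Hamiltonian is affine in the control pair $(u_L,u_R)$, so that Pontryagin's minimum principle reduces the instantaneous choice of optimal control to minimizing a linear functional over the box $[0,U_L]\times[0,U_R]$. I would introduce the two switching functions $\sigma_L(t):=\beta(t)+\mu_L e$ and $\sigma_R(t):=\mu_R e-\beta(t)$, i.e.\ the coefficients of $u_L$ and $u_R$ in $H$. Pointwise minimization of $H$ then forces $u_L=0$ where $\sigma_L>0$ and $u_L=U_L$ where $\sigma_L<0$, and likewise $u_R=0$ where $\sigma_R>0$ and $u_R=U_R$ where $\sigma_R<0$. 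The one identity that does essentially all of the work is $\sigma_L(t)+\sigma_R(t)=(\mu_L+\mu_R)\,e\ge 0$, a nonnegative constant since $e\ge 0$ is constant and $\mu_L,\mu_R\ge 0$.

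First I would rule out the ``full-bang'' action $(U_L,U_R)$: it would require $\sigma_L<0$ and $\sigma_R<0$ simultaneously, contradicting $\sigma_L+\sigma_R\ge 0$. Hence, away from the (at most) zero-measure sets where $\sigma_L$ or $\sigma_R$ vanish only instantaneously, the admissible actions are exactly $(0,0)$, $(U_L,0)$, and $(0,U_R)$, all of which lie in $\mathcal{U}$. The only remaining possibility to dispose of is a \emph{singular arc}, i.e.\ a maximal interval on which one of the switching functions vanishes identically; here $\dot\sigma_L=\dot\beta=\lambda\sin(\alpha-\phi)=-\dot\sigma_R$ from \eqref{eq: adjoint_equations} is the tool I would differentiate with.

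Suppose $\sigma_L\equiv 0$ on such an interval; then $\beta\equiv-\mu_L e$ is constant there, so $\dot\beta\equiv 0$ and hence $\lambda\sin(\alpha-\phi)\equiv 0$, while $\sigma_R\equiv(\mu_L+\mu_R)e$. If $\lambda=0$ on the arc, one checks---distinguishing $e>0$ (where $\sigma_R>0$ forces $u_R=0$ and then $H\equiv e>0$) from $e=0$ (where $\beta\equiv 0$, so the whole adjoint vector is trivial)---that both cases contradict, respectively, $H\equiv 0$ along an optimal trajectory and the non-triviality guaranteed by PMP. Hence $\lambda\neq 0$, so $\sin(\alpha-\phi)\equiv 0$, $\alpha$ is constant, and $\dot\alpha=u_L-u_R\equiv 0$ on the arc. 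If $e>0$, then $\sigma_R>0$ forces $u_R=0$, and together with $u_L=u_R$ the control is $(0,0)$; if $e=0$ the triviality contradiction recurs. The case $\sigma_R\equiv 0$ is symmetric (now $\sigma_L\equiv(\mu_L+\mu_R)e$), and the case where $\sigma_L$ and $\sigma_R$ vanish simultaneously on an interval forces $\beta\equiv 0$ and $e=0$, hence $\lambda\neq 0$, $\sin(\alpha-\phi)\equiv 0$, and $H\equiv\lambda\cos(\alpha-\phi)\neq 0$, again impossible. Thus along every singular arc the control equals $(0,0)\in\mathcal{U}$, and combining this with the bang-bang classification yields $\mathcal{U}=\{(0,0),(U_L,0),(0,U_R)\}$.

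The step I expect to be the main obstacle is the singular-arc analysis, in particular the bookkeeping around the abnormal case $e=0$ and the repeated appeals to non-triviality of the adjoint variables needed to exclude $\lambda=0$; by contrast, eliminating $(U_L,U_R)$ and classifying the non-singular controls is immediate from the sign identity $\sigma_L+\sigma_R=(\mu_L+\mu_R)e$.
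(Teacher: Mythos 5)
Your proposal is correct and follows essentially the same route as the paper's proof: pointwise minimization of the control-affine Hamiltonian via the switching functions $\beta+\mu_L e$ and $\mu_R e-\beta$, a bang-type classification on the three sign regions of $\beta$, and a singular-arc analysis that uses $\dot\beta=\lambda\sin(\alpha-\phi)$ together with $H\equiv 0$ and PMP non-triviality to exclude $\lambda=0$. Your explicit exclusion of $(U_L,U_R)$ via $\sigma_L+\sigma_R=(\mu_L+\mu_R)e\ge 0$ and your pinning down of the singular-arc control as $(0,0)$ (rather than merely ``a straight segment'') are slightly more detailed than the paper's write-up, but the underlying argument is the same; note only that in the sub-branch $\sigma_L\equiv 0$, $e=0$, $\lambda\neq 0$ the correct contradiction is $H\equiv\pm\lambda\neq 0$ (as you give in the simultaneous-vanishing case), not non-triviality.
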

\begin{proof}
From the Hamiltonian given in \eqref{eq: Hamiltonian_updated_expression}, and for it to be minimum, the following conditions should hold
\begin{align} \label{eq: conditions_optimal_control}
    \left(\beta (t) + \mu_L e \right) u_L (t) \leq 0, \, \left(\mu_R e - \beta (t) \right) u_R (t) \leq 0.
\end{align}
Further, either of the two cases holds:
\begin{itemize}
    \item $\frac{\partial H}{\partial u_L} = \beta (t) + \mu_L e \equiv 0$ or $\frac{\partial H}{\partial u_R} = \mu_R e - \beta (t) \equiv 0$. This implies that $\beta$ is a constant, and therefore, $\dot{\beta} (t) \equiv 0.$ Consequently, $\lambda \sin{\left(\alpha (t) - \phi \right)} \equiv 0.$ If $\lambda = 0,$ the Hamiltonian in \eqref{eq: Hamiltonian_updated_expression} reduces to
    \begin{align}
        H = \begin{cases}
            \left(1 + \left(\mu_L + \mu_R \right) u_R (t) \right) e, & \beta (t) \equiv -\mu_L e \\
            \left(1 + \left(\mu_L + \mu_R \right) u_L (t) \right) e, & \beta (t) \equiv \mu_R e
        \end{cases}.
    \end{align}
    Since $u_L, u_R \geq 0,$ and $\mu_L, \mu_R \geq 0$ with $\mu_L + \mu_R > 0$, $e$ should be equal to zero as $H \equiv 0$. Therefore, $\beta (t) \equiv 0.$ Hence, $(e, p, q, \beta) \equiv \mathbf{0},$ which violates the non-triviality condition of PMP. Therefore, $\dot{\beta} (t) \equiv 0 \implies \sin{\left(\alpha (t) - \phi \right)} \equiv 0$.
    Therefore, $\alpha (t) \equiv \phi$ or $\alpha (t) \equiv \phi + \pi.$ Hence, the path is a straight line segment.
    \item $\frac{\partial H}{\partial u_L} \not \equiv 0$ and $\frac{\partial H}{\partial u_R} \not \equiv 0$. Three types of control actions arise depending on the value of $\beta$ using \eqref{eq: conditions_optimal_control}:
    \begin{itemize}
        \item If $\beta (t) < -\mu_L e$, $u_L = U_L = \frac{1}{r_L}$ and $u_R = 0$. The path is an arc of a circle corresponding to a left turn of radius $r_L$.
        \item If $-\mu_L e < \beta (t) < \mu_R e,$ $u_L = 0$ and $u_R = 0.$ The path corresponds to a straight line segment.
        \item If $\beta (t) > \mu_R e$, $u_L = U_L = 0$ and $u_R = U_R = \frac{1}{r_R}.$ The path is an arc of a circle corresponding to a right turn of radius $r_R$.
    \end{itemize}
\end{itemize}
\end{proof}

\begin{proposition}
Any optimal path is a concatenation of arcs of circles of radius $r_L$ corresponding to a left turn and $r_R$ corresponding to a right turn, and straight line segments.
\end{proposition}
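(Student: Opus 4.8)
The plan is to obtain the proposition as a consequence of Lemma~\ref{lemma: optimal_control_actions} together with a regularity argument on the adjoint variable $\beta$. By Lemma~\ref{lemma: optimal_control_actions}, at almost every instant the optimal control pair lies in $\mathcal{U} = \{(0,0),(U_L,0),(0,U_R)\}$, and these three choices correspond, respectively, to a straight line segment, a circular arc of radius $r_L = 1/U_L$ traversed by a left turn, and a circular arc of radius $r_R = 1/U_R$ traversed by a right turn. Hence it suffices to show that $[0,t_f]$ decomposes into \emph{finitely many} maximal subintervals on each of which the control is constant; concatenating the corresponding pieces then proves the claim.

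First I would note that $\beta$ is absolutely continuous, since $\dot\beta(t) = \lambda\sin(\alpha(t)-\phi)$ is bounded, and that --- by the case analysis in the proof of Lemma~\ref{lemma: optimal_control_actions} --- the active control is determined by the position of $\beta(t)$ relative to the two fixed thresholds $-\mu_L e$ and $\mu_R e$: $\beta(t) < -\mu_L e$ gives a left arc, $-\mu_L e < \beta(t) < \mu_R e$ a straight segment, $\beta(t) > \mu_R e$ a right arc, while if $\beta$ stays on a threshold over an interval of positive length then $\dot\beta \equiv 0$ there, which again forces a straight segment. Thus every switch of the control is a crossing of $\beta$ through one of these two fixed levels.

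Next I would pin down the dynamics of $\beta$ on each segment type. On a straight segment $\alpha$ is constant with $\sin(\alpha-\phi)=0$, so $\dot\beta \equiv 0$ and $\beta$ is constant; on a circular arc $\dot\alpha$ is a nonzero constant, hence $\alpha$ is affine in $t$ and integrating $\dot\beta = \lambda\sin(\alpha-\phi)$ shows $\beta$ is a horizontally and vertically shifted cosine of $t$, which attains each of the two threshold values only finitely often on the compact interval $[0,t_f]$ (the degenerate subcase $\lambda = 0$ again forces $\dot\beta \equiv 0$ and a single straight segment). Therefore the switching times form a finite set, which yields the desired finite concatenation. I expect the only genuine obstacle to be this last point --- excluding an accumulation of switchings (chattering) --- for which the explicit sinusoidal form of $\beta$ on each arc is the crux; everything else is bookkeeping built on Lemma~\ref{lemma: optimal_control_actions}.
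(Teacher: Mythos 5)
Your overall route is the same as the paper's: the paper states this proposition with no proof at all, as an immediate corollary of Lemma~\ref{lemma: optimal_control_actions} (each admissible control value in $\mathcal{U}$ generates a straight segment, an $r_L$-arc, or an $r_R$-arc). The extra content you supply --- ruling out an accumulation of switching times so that the path is a \emph{finite} concatenation --- goes beyond what the paper records, and that is a reasonable thing to want; but one of the steps you use to get it is wrong as stated.

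The error is your claim that on a straight segment $\alpha$ is constant \emph{with} $\sin(\alpha-\phi)=0$, so that $\beta$ is constant there. That identity is forced only on \emph{singular} straight segments, where $\beta$ sits identically on a threshold ($\beta\equiv-\mu_L e$ or $\beta\equiv\mu_R e$) so that $\dot\beta\equiv 0$. On a generic straight segment, i.e.\ when $-\mu_L e<\beta(t)<\mu_R e$, the control is $(0,0)$ but $\dot\beta=\lambda\sin(\alpha-\phi)$ is a constant that is in general nonzero, so $\beta$ is affine with nonzero slope; this is exactly the paper's phase portrait for $\lambda>e=1$, where $\dot\beta=\pm\sqrt{\lambda^2-1}$ on the strip $-\mu_L\le\beta\le\mu_R$, and it is what produces the $S$-segment length $\frac{\mu_L+\mu_R}{\sqrt{\lambda^2-1}}$ in Lemma~\ref{lemma: length_S_segment_lambda_greater_than_1}. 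Your finiteness conclusion survives once this is fixed --- on every maximal constant-control interval $\beta$ is either a nondegenerate sinusoid (arcs), an affine function with nonzero slope (interior $S$ segments, which therefore take the fixed positive time above when $e>0$, and which cannot occur at all when $e=0$ since the thresholds coincide and $\dot\beta=\pm\lambda\ne 0$ there by Lemma~\ref{lemma: e_0_lambda_greater_than_zero}), or constant on a threshold (singular $S$) --- so threshold crossings cannot accumulate; but the argument must be rewritten in these terms rather than via ``$\beta$ constant on $S$.'' A further small slip: in the degenerate subcase $\lambda=0$ (with $e>0$) the path is a single \emph{arc}, not a straight segment, since $u_L=u_R=0$ and $\lambda=0$ would give $H=e>0$, contradicting $H\equiv 0$; this matches the paper's Case~2.1, where the only candidate is $C$. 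Neither issue threatens the proposition itself, but both steps as written contradict the paper's own analysis.
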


Henceforth, a left turn segment will be denoted by $L$, a right turn segment will be denoted by $R$, and a straight line segment by $S$. Moreover, a segment corresponding to a turn will be denoted by $C$.

From Lemma~\ref{lemma: optimal_control_actions}, the control actions $u_L$ and $u_R$ depend on the value of $\beta$, and can be depicted as shown in Fig.~\ref{fig: beta_line_segment_control_action}. Using Lemma~\ref{lemma: optimal_control_actions} and Fig.~\ref{fig: beta_line_segment_control_action}, and noting that $\beta$ is continuous, the observations made are given in the following proposition and lemma.

\begin{figure}[htb!]
    \centering
    \includegraphics[width = 0.8\linewidth]{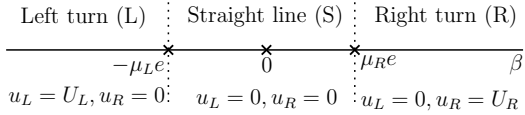}
    \caption{Control action depending on the value of $\beta$}
    \label{fig: beta_line_segment_control_action}
\end{figure}

\begin{proposition} \label{prop: inflection_points_CS_SC}
The value of $\beta$ corresponding to the inflection point of a $CS$ or $SC$ subpath of an optimal path is as follows:
\begin{enumerate}
    \item At the inflection point of an $LS$ or $SL$ subpath, $\beta = -\mu_L e$.
    \item At the inflection point of an $RS$ or $SR$ subpath, $\beta = \mu_R e$.
\end{enumerate}
\end{proposition}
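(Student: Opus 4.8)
The plan is to combine the continuity of the adjoint variable $\beta$ with the one-sided bounds that Lemma~\ref{lemma: optimal_control_actions} forces on $\beta$ along each segment type. First I would record those bounds. Because the Hamiltonian \eqref{eq: Hamiltonian_updated_expression} is affine in $(u_L, u_R)$, PMP selects, on the interior of each segment, the value of each control that minimizes its own linear term: on an $L$ arc one has $(u_L, u_R) = (U_L, 0)$ with $U_L > 0$, which is a minimizer only if the coefficient $\beta(t) + \mu_L e \le 0$, i.e.\ $\beta(t) \le -\mu_L e$; on an $S$ segment one has $(u_L, u_R) = (0,0)$, which is a minimizer only if $\beta(t) + \mu_L e \ge 0$ and $\mu_R e - \beta(t) \ge 0$, i.e.\ $-\mu_L e \le \beta(t) \le \mu_R e$; and symmetrically on an $R$ arc one has $\beta(t) \ge \mu_R e$. (Equivalently, the sign of $\beta(t) + \mu_L e$ along an $L$ arc can be read off from \eqref{eq: conditions_optimal_control} with $u_L > 0$.)

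Next I would invoke continuity: from \eqref{eq: adjoint_equations}, $\dot{\beta} = \lambda \sin(\alpha - \phi)$ is bounded, so $\beta$ is Lipschitz continuous, as already noted before the statement. Let $t^\star$ be the inflection point of an $LS$ subpath, so that the path follows the $L$ arc on $(t^\star - \delta, t^\star)$ and the straight segment on $(t^\star, t^\star + \delta)$ for some $\delta > 0$. Taking $t \uparrow t^\star$ along the arc yields $\beta(t^\star) \le -\mu_L e$, while taking $t \downarrow t^\star$ along the straight segment yields $\beta(t^\star) \ge -\mu_L e$; hence $\beta(t^\star) = -\mu_L e$, proving item~1 for $LS$. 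The reversed concatenation handles the $SL$ case verbatim, and the mirror-image argument, using the $R$-arc bound $\beta \ge \mu_R e$ against the $S$-segment bound $\beta \le \mu_R e$, gives $\beta = \mu_R e$ at the inflection point of an $RS$ or $SR$ subpath, which is item~2.

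I do not expect a substantive obstacle here; the one delicate point is justifying the one-sided inequalities, i.e.\ that $\beta$ genuinely stays on the correct side of the relevant threshold throughout each segment rather than only at stationary points. The clean way to see this is to use that the Hamiltonian is affine in the controls, so the PMP minimization pins down the sign of each control's coefficient, rather than relying solely on the complementarity relations \eqref{eq: conditions_optimal_control}. It is also worth remarking that if $\beta$ equalled a threshold on an entire sub-interval this would be a singular arc, which by the first case in the proof of Lemma~\ref{lemma: optimal_control_actions} is a straight line and therefore belongs to the adjacent $S$ segment; thus no contradiction arises and the inflection point is unambiguously the place where $\beta$ meets the threshold.
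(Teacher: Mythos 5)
Your argument is correct and is essentially the paper's own justification: the proposition is stated as an immediate consequence of Lemma~\ref{lemma: optimal_control_actions} (which pins $\beta$ to one side of the threshold $-\mu_L e$ or $\mu_R e$ on each segment type) together with the continuity of $\beta$, exactly the squeeze you perform at the junction time. Your extra care in phrasing the segment bounds as weak inequalities from the PMP minimization, and the remark on singular arcs, only tighten the same reasoning.
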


\begin{lemma} \label{lemma: CC_inflection_point}
An optimal path can contain a $LR$ or $RL$ subpath if $e = 0$ and at the inflection point, $\beta = 0$.
\end{lemma}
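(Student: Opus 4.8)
The plan is to argue by contradiction on the sign of $e$, using only the continuity of $\beta$ together with the switching characterization already established in Lemma~\ref{lemma: optimal_control_actions}. Suppose an optimal path contains an $LR$ subpath, and let $t_s$ be the instant at which the control switches from $(u_L,u_R)=(U_L,0)$ to $(u_L,u_R)=(0,U_R)$. On the $L$-portion immediately preceding $t_s$, Lemma~\ref{lemma: optimal_control_actions} gives $\beta(t) < -\mu_L e$, and on the $R$-portion immediately following $t_s$ it gives $\beta(t) > \mu_R e$.

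Next I would pass to the limit at $t_s$. Since $\dot\beta(t) = \lambda\sin(\alpha(t)-\phi)$ is bounded, $\beta$ is continuous, so $\beta(t_s) = \lim_{t \uparrow t_s}\beta(t) \leq -\mu_L e$ and, simultaneously, $\beta(t_s) = \lim_{t \downarrow t_s}\beta(t) \geq \mu_R e$. Chaining these inequalities yields $\mu_R e \leq \beta(t_s) \leq -\mu_L e$, hence $(\mu_L + \mu_R)\,e \leq 0$. Invoking the standing hypotheses $\mu_L,\mu_R \geq 0$ with $\mu_L + \mu_R > 0$ and $e \geq 0$ from PMP, this forces $e = 0$. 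Substituting $e = 0$ back into $\mu_R e \leq \beta(t_s) \leq -\mu_L e$ gives $0 \leq \beta(t_s) \leq 0$, i.e., $\beta(t_s) = 0$, which is the claimed value of $\beta$ at the inflection point.

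The $RL$ case is handled identically after exchanging the roles of the two turns: on the $R$-portion before $t_s$ one has $\beta(t) > \mu_R e$ and on the $L$-portion after $t_s$ one has $\beta(t) < -\mu_L e$, which again leads to $\mu_R e \leq \beta(t_s) \leq -\mu_L e$ and hence to $e = 0$ and $\beta(t_s) = 0$.

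I do not anticipate a serious obstacle; the only point that requires care is justifying the non-strict inequalities $\beta(t_s) \leq -\mu_L e$ and $\beta(t_s) \geq \mu_R e$ at the switching time, which follows because the strict inequalities of Lemma~\ref{lemma: optimal_control_actions} hold on the open sub-intervals and $\beta$ is continuous. It is worth emphasizing that the argument uses nothing about the geometric location of the inflection point — only the ordering of the threshold values $-\mu_L e$ and $\mu_R e$ on the $\beta$-axis of Fig.~\ref{fig: beta_line_segment_control_action} — so the conclusion is purely a consequence of the adjoint dynamics.
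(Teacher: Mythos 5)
Your proposal is correct and follows essentially the same route as the paper: both use the continuity of $\beta$ together with the switching thresholds $-\mu_L e$ and $\mu_R e$ from Lemma~\ref{lemma: optimal_control_actions} to rule out $e>0$ (the paper phrases this as $\beta$ being forced to jump at the inflection point, you phrase it as the one-sided limits squeezing $\beta(t_s)$ into the empty interval $[\mu_R e, -\mu_L e]$) and then conclude $\beta=0$ there. Your version merely makes the limit-taking and the inequality chain explicit; no substantive difference.
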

\begin{proof}
Let $t$ be the time instant corresponding to the inflection point of an $LR$ subpath. 
From Lemma~\ref{lemma: optimal_control_actions}, $\beta (t^-) < -\mu_L e$ and $\beta (t^+) > \mu_R e.$ Two cases arise depending on the value of $e:$ $e > 0$ and $e = 0.$ However, $e > 0 \implies \beta$ is discontinuous at $t$ as $\mu_L, \mu_R \geq 0$ and $\mu_L + \mu_R > 0.$ Since $\beta$ is continuous, $e = 0$, which implies that $\beta = 0$ at the inflection point. A similar argument can be made for an $RL$ subpath.
\end{proof}

The candidate paths for an optimal path will be obtained using a phase portrait approach. Among the adjoint variables, $\beta$ is the only time-varying adjoint variable. Moreover, the control actions, and therefore, the segments of an optimal path depend on the trajectory of $\beta.$ Hence, the phase portrait of $\beta$ can be used to determine the candidate paths. To this end, an equation relating $\beta$ and $\dot{\beta}$ is first obtained. To obtain this relation, we use Heaviside function, $\theta$, defined below: $$\theta(\eta) = \begin{cases} 1, & \eta \ge 0, \\
0, & \eta <0. 
\end{cases}
$$
From \eqref{eq: Hamiltonian_updated_expression}, noting that $H \equiv 0,$ $\lambda \cos{\left(\alpha (t) - \phi \right)}$ can be obtained as
\begin{align}
\begin{split}
    -\lambda \cos{\left(\alpha (t) - \phi \right)} &= e + \left(\beta (t) + \mu_L e \right) U_L \theta \left(-\beta (t) - \mu_L e \right) \\
    & \quad\, + \left(\mu_R e - \beta (t) \right) U_R \theta \left(\beta (t) - \mu_R e \right).
\end{split}
\label{eq:eqn13}
\end{align}
Using \eqref{eq:eqn13} and the expression for $\dot{\beta}$ from \eqref{eq: adjoint_equations}, $\beta$ and $\dot{\beta}$ are related by
\begin{align} \label{eq: beta_betadot_equation}
\begin{split}
    &\Big(e + \left(\beta (t) + \mu_L e \right) U_L \theta\left(-\beta (t) - \mu_L e \right) \\
    & \,\,+ \left(\mu_R e - \beta (t) \right) U_R \theta\left(\beta (t) - \mu_R e \right) \Big)^2 + \dot{\beta}^2 (t) = \lambda^2.
\end{split}
\end{align}
In the above equation, the parameters $r_L, r_R, \mu_L,$ and $\mu_R$ correspond to the vehicle; the relation between $\beta$ and $\dot{\beta}$ depends on $e$ and $\lambda.$ As $e$ and $\lambda$ are constants for a given path, the complete list of candidate paths can be obtained from the following cases:
\begin{itemize}
    \item Case 1: $e = 0.$ The solutions obtained are abnormal solutions as they are independent of the objective functional given in \eqref{eq: objective_functional}.
    \item Case 2: $e > 0$. In this case, $e$ can be set to one without loss of generality, as it leads to a corresponding scaling of $\lambda,$ $\beta,$ and consequently, $\dot{\beta}$. Three sub-cases arise depending on the value of $\lambda$:
    \begin{itemize}
        \item Case 2.1: $\lambda < e = 1.$
        \item Case 2.2: $\lambda = e = 1.$
        \item Case 2.3: $\lambda > e = 1.$
    \end{itemize}
\end{itemize}
The obtained solutions for each of these cases are derived in the following subsections.

\subsection{Candidate paths for $e = 0$}

When $e = 0,$ \eqref{eq: beta_betadot_equation} reduces to
\begin{align} \label{eq: beta_betadot_e_zero}
    \left(\beta (t) U_L \theta \left(-\beta (t) \right) - \beta (t) U_R \theta \left(\beta(t) \right) \right)^2 + \dot{\beta}^2 (t) = \lambda^2.
\end{align}

\begin{lemma} \label{lemma: e_0_lambda_greater_than_zero}
If $e = 0,$ then $\lambda > 0.$
\end{lemma}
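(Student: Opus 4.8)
Since $\lambda = \sqrt{p^2+q^2} \ge 0$ by definition, the statement is equivalent to ruling out $\lambda = 0$. The plan is a short proof by contradiction using the non-triviality condition of PMP together with the phase-portrait relation \eqref{eq: beta_betadot_e_zero}.

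First I would assume $e = 0$ and $\lambda = 0$. Substituting both into \eqref{eq: beta_betadot_e_zero} leaves a sum of two squares equal to zero, namely
\begin{align}
\left(\beta(t) U_L \theta(-\beta(t)) - \beta(t) U_R \theta(\beta(t))\right)^2 + \dot{\beta}^2(t) = 0,
\end{align}
so each term must vanish identically: $\dot{\beta}(t) \equiv 0$ (hence $\beta$ is constant), and the Heaviside bracket is identically zero. I would then do a sign case analysis on the constant $\beta$: if $\beta > 0$ the bracket equals $-\beta U_R$, if $\beta < 0$ it equals $\beta U_L$, and since $U_L = 1/r_L > 0$ and $U_R = 1/r_R > 0$, both cases force $\beta = 0$; the case $\beta = 0$ is trivially consistent. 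Hence $\beta \equiv 0$.

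Finally, from \eqref{eq: adjoint_equations} we have $p = \lambda\cos\phi = 0$ and $q = \lambda\sin\phi = 0$, and we are assuming $e = 0$, so the entire adjoint tuple $(e, p, q, \beta)$ vanishes identically. This contradicts the non-triviality condition of PMP (the same contradiction invoked in the proof of Lemma~\ref{lemma: optimal_control_actions}). Therefore $\lambda \ne 0$, i.e., $\lambda > 0$. I do not expect any real obstacle here; the only points requiring care are correctly reading off the Heaviside terms for each sign of $\beta$ and recalling that $U_L, U_R$ are strictly positive, so that the bracket vanishing genuinely forces $\beta = 0$.
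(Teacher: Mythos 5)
Your proposal is correct and follows essentially the same argument as the paper: assume $\lambda = 0$, use \eqref{eq: beta_betadot_e_zero} to force $\dot{\beta} \equiv 0$ and the Heaviside bracket to vanish, conclude $\beta \equiv 0$ since $U_L, U_R > 0$, and derive a contradiction with the non-triviality condition of PMP. The sign case analysis you spell out is exactly the step the paper compresses into one sentence.
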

\begin{proof}
If $e = 0,$ two cases arise depending on the value of $\lambda:$ $\lambda = 0$ and $\lambda > 0.$ If $\lambda = 0,$ from \eqref{eq: beta_betadot_e_zero},
\begin{align*}
    \dot{\beta} (t) \equiv 0, \quad \beta (t) U_L \theta \left(-\beta (t) \right) - \beta (t) U_R \theta\left(\beta(t) \right) \equiv 0.
\end{align*}
The only solution for $\beta$ that satisfies the above two conditions is $\beta (t) \equiv 0$ as $U_L, U_R > 0.$ However, the nontriviality condition of PMP is violated as all adjoint variables are identically equal to zero. Therefore, $e = 0 \implies \lambda > 0.$
\end{proof}

Equation~\eqref{eq: beta_betadot_e_zero} can be modified into three equations depending on $\beta$ as
\begin{align} \label{eq: equations_phase_portrait_e_0}
    f \left(\beta (t), \dot{\beta} (t) \right) = \begin{cases}
        \left(\frac{\beta (t)}{r_L} \right)^2 + \dot{\beta}^2 (t) = \lambda^2, & \beta (t) < 0 \\
        \dot{\beta}^2 (t) = \lambda^2, & \beta (t) = 0 \\
        \left(\frac{\beta (t)}{r_R} \right)^2 + \dot{\beta}^2 (t) = \lambda^2, & \beta (t) > 0
    \end{cases},
\end{align}
since $U_L = \frac{1}{r_L}$ and $U_R = \frac{1}{r_R}.$ The phase portrait obtained for $\beta$ using the above defined function is shown in Fig.~\ref{fig: phase_portrait_e_0}. Since $e = 0,$ from the proof of Lemma~\ref{lemma: optimal_control_actions} and Fig.~\ref{fig: beta_line_segment_control_action}, an $S$ segment exists over a time interval $\mathcal{I}$ of the path only when $\beta (t) = 0$ for $t \in \mathcal{I}$. Since $\dot{\beta} (t) = \pm \lambda$ when $\beta (t) = 0$ from \eqref{eq: equations_phase_portrait_e_0}, an $S$ segment is not a part of an optimal path for $e = 0$. Hence, $(\beta (t), \dot{\beta} (t)) \in \{(0, \lambda), (0, -\lambda)\}$ is an inflection point between two $C$ segments. Therefore, for $e = 0,$ candidate optimal paths are concatenations of $C$ segments.

\begin{figure}[htb!]
    \centering
    \includegraphics[width = 0.5\linewidth]{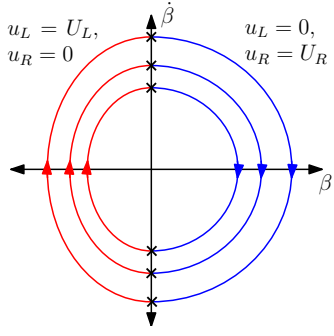}
    \caption{Phase portrait of $\beta$ for $e = 0$}
    \label{fig: phase_portrait_e_0}
\end{figure}

\begin{remark}
A non-trivial path is defined as a path wherein all subpaths have a non-zero length.
\end{remark}

\begin{theorem} \label{theorem: CCC_nonoptimality}
For $e = 0,$ a non-trivial $CCC$ path is non-optimal.
\end{theorem}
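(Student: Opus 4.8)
The plan is to show that any non-trivial $CCC$ extremal with $e=0$ can be replaced, at strictly smaller cost and with the same initial and final configurations, by a $CSC$-type path, so it cannot be optimal. Two consecutive arcs turning the same way merge into one, so a non-trivial $CCC$ path is $LRL$ or $RLR$; I would treat $LRL$, the case $RLR$ following from the left--right reflection symmetry of the problem (which interchanges $r_L\leftrightarrow r_R$ and $\mu_L\leftrightarrow\mu_R$). By continuity of $\beta$ and $\dot\beta$ and the sign of $\dot\beta$ at the switches, the two inflection points in the phase portrait (Fig.~\ref{fig: phase_portrait_e_0}) are $(\beta,\dot\beta)=(0,\lambda)$ (entering the $R$ arc) and $(0,-\lambda)$ (leaving it), so the middle $R$ arc sweeps exactly half of the ellipse $(\beta/r_R)^2+\dot\beta^2=\lambda^2$. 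With $\dot\beta=\lambda\sin(\alpha-\phi)$ and \eqref{eq:eqn13} for $e=0$, this forces the heading change on the middle arc to have magnitude exactly $\pi$, while each outer $L$ arc stays in $\{\beta<0\}$ and has heading change $\ell_1,\ell_2\in(0,\pi)$.

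Next I would exploit the geometry. Because the middle $R$ arc is a half-turn, the vehicle ends it diametrically opposite on the $R$-circle with reversed heading; a short computation then gives that the centres of the first-$L$, middle-$R$ and last-$L$ circles are collinear, with the two equal-radius ($r_L$) outer circles a distance $2(r_L+r_R)$ apart. Assume first $\ell_1,\ell_2\ge\pi/2$. The $LSL$ path following the common external tangent of the two outer circles then consists of $L$ arcs with heading changes $\ell_1-\pi/2$ and $\ell_2-\pi/2$ and a straight segment of length $2(r_L+r_R)$, and a direct check shows it joins the same configurations. Its cost is $(r_L+\mu_L)(\ell_1+\ell_2-\pi)+2(r_L+r_R)$, while the $CCC$ path costs $(r_L+\mu_L)(\ell_1+\ell_2)+(r_R+\mu_R)\pi$; the difference $(\pi-2)(r_L+r_R)+\pi(\mu_L+\mu_R)$ is strictly positive since $\pi>2$, $r_L,r_R>0$ and $\mu_L+\mu_R\ge0$. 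Hence in this case the $CCC$ path is not optimal.

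In the remaining cases an outer arc is shorter than $\pi/2$ and the external-tangent $LSL$ is no longer the right competitor. Instead I would join the initial right-turn circle $Q_1$ to the final left-turn circle $O_3$ along their common internal tangent, which always exists because $|Q_1O_3|=(r_L+r_R)\sqrt{5-4\cos\ell_1}\ge r_L+r_R$, obtaining an $RSL$ path whose straight piece has length $2\sqrt{2}\,(r_L+r_R)\sin(\ell_1/2)$; one then shows its cost is again strictly below that of the $CCC$ path (with a mirror-image construction when the last arc is short, and an $RSR$ competitor when both are). This last step is the main obstacle: outside the main case the competitor's arc lengths have no closed form as clean as $\ell_i-\pi/2$, so the comparison needs heavier bookkeeping, one must check the chosen tangent yields a forward-feasible path, and one must confirm the inequality stays strict up to the degenerate limits where the competitor would collapse onto the $CCC$ path itself.
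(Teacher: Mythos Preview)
Your reduction of the middle arc to exactly $\pi$ via the phase portrait matches the paper's. The divergence is in the replacement step. In your main case $\ell_1,\ell_2\ge\pi/2$ the $LSL$ external-tangent competitor and the cost computation are correct, and the collinearity of the three centres is exactly what the paper exploits. But the short-arc cases are left as a sketch: you have not carried out the $RSL$/$RSR$ comparisons, and as you yourself flag, the arc lengths there have no clean closed form and the forward-feasibility checks are nontrivial. As written, this is a genuine gap.

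The paper sidesteps the case split entirely by a \emph{local} rather than global replacement. Instead of replacing the whole $L_{\ell_1}R_\pi L_{\ell_2}$ path, it extracts the symmetric subpath $L_\delta R_\pi L_\delta$ for any $0<\delta<\min(\ell_1,\ell_2,\tfrac{\pi}{2})$, which always exists since $\ell_1,\ell_2>0$. This subpath is replaced by an $R_{\pi/2-\delta}\,S_{l_S}\,R_{\pi/2-\delta}$ path joining the same endpoint configurations, where $l_S=2(r_L+r_R)(1-\cos\delta)$ is the distance between the centres of the two new $R$ circles. The cost saving is
\[
2(c_L+c_R)\delta - 2(r_L+r_R)(1-\cos\delta) \;>\; 2(r_L+r_R)\bigl(\delta+\cos\delta-1\bigr)\;>\;0
\]
for all $\delta\in(0,\tfrac{\pi}{2})$, using $\mu_L+\mu_R>0$. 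One uniform computation thus disposes of every value of the outer-arc lengths. Your global $LSL$ replacement fails precisely when an outer arc is too short to reach the external tangent; the paper's trick is simply to shrink $\delta$ until the replacement fits inside the given path, which removes the need for your unfinished case analysis.
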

\begin{proof}
Let a non-trivial $LRL$ path be optimal. Let the time instants corresponding to the first and second inflection points be $t_1$ and $t_2,$ respectively. From Figs.~\ref{fig: beta_line_segment_control_action} and \ref{fig: phase_portrait_e_0}, $(\beta (t_1), \dot{\beta} (t_1)) = (0, \lambda)$ and $(\beta (t_2), \dot{\beta} (t_2)) = (0, -\lambda)$, since $\beta (t) < 0$ for $t \in (0, t_1) \bigcup (t_2, t_f)$ and $\beta (t) > 0$ for $t \in (t_1, t_2)$. As $\dot{\beta} (t) = \lambda \sin{\left(\alpha (t) - \phi \right)},$ and $\lambda > 0$ from Lemma~\ref{lemma: e_0_lambda_greater_than_zero},
\begin{align*}
    \sin{\left(\alpha (t_1) - \phi \right)} = 1, \quad \sin{\left(\alpha (t_2) - \phi \right)} = -1.
\end{align*}
Hence, the heading angles $\alpha (t_1)$ and $\alpha (t_2)$ are separated by an odd multiple of $\pi$. Therefore, for a non-trivial optimal $LRL$ path, the middle arc angle equals $\pi$. A similar argument can be made for a non-trivial optimal $RLR$ path. Hence, a non-trivial $CCC$ path cannot be optimal if a non-trivial $C C_\pi C$ path is shown to be non-optimal.

Consider an $L_\alpha R_\pi L_\gamma$ path, where $\alpha, \gamma > 0$. Without loss of generality, let the circle corresponding to the $L_\alpha$ segment be located at the origin, as shown in Fig.~\ref{fig: non_optimality_CCC}. Since the middle arc angle equals $\pi,$ the two inflection points $i_1$ and $i_2$ lie on the line segment connecting the centers of the circles corresponding to the two $L$ segments. Therefore, without loss of generality, the center of the circle corresponding to the $L_\gamma$ segment is placed along the $x$-axis. Consider an $L_\delta R_\pi L_\delta$ subpath of the $L_\alpha R_\pi L_\gamma$ path, where $0 < \delta < \min{\left(\alpha, \gamma, \frac{\pi}{2} \right)}$. It is claimed that there exists an alternate $RSR$ path that has a lower cost than the $L_\delta R_\pi L_\delta$ subpath. The $L_\delta R_\pi L_\delta$ subpath and the proposed $RSR$ path are shown in Fig.~\ref{fig: non_optimality_CCC} using dash-dotted and dashed lines, respectively. 

\begin{figure}[htb!]
    \centering
    \includegraphics[width = 0.9\linewidth]{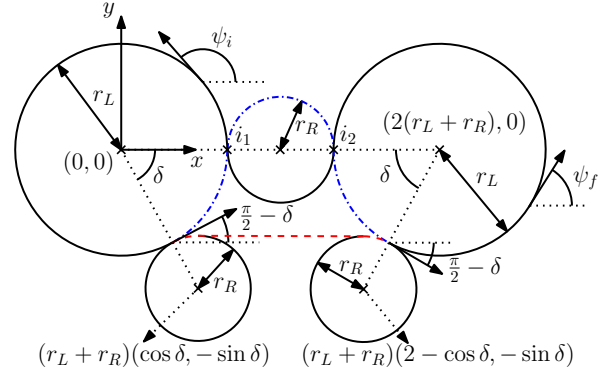}
    \caption{Non-optimality of an $L_\alpha R_\pi L_\gamma$ path}
    \label{fig: non_optimality_CCC}
\end{figure}

From Fig.~\ref{fig: non_optimality_CCC}, it can be observed that the length of the line segment ($l_S$) of the $RSR$ path equals the distance between the centers of the circles corresponding to the $R$ segments. The length $l_S$ can be obtained as $2 (r_L + r_R) (1 - \cos{\delta})$. Moreover, from this figure, it can be observed that the angle of turn for each $R$ segment equals $\frac{\pi}{2} - \delta,$ since the $S$ segment of the $RSR$ path is parallel to the $x-$axis. 
The cost difference between the $L_\delta R_\pi L_\delta$ subpath and the $R_{\frac{\pi}{2} - \delta} S_{l_S} R_{\frac{\pi}{2} - \delta}$ path is given by
\begin{align} \label{eq: cost_difference_CCC}
\begin{split}
     \Delta c &= c_{L_\delta R_\pi L_\delta} - c_{R_{\frac{\pi}{2} - \delta} S_{l_S} R_{\frac{\pi}{2} - \delta}} \\
    &= \left(2 c_L \delta + c_R \pi \right) - \left(2 c_R \left(\frac{\pi}{2} - \delta \right) + c_S l_S \right) \\
    &= 2 \left(c_L + c_R \right) \delta - 2 c_S (r_L + r_R) (1 - \cos{\delta}),
\end{split}
\end{align}
where $c_L, c_R,$ and $c_S$ are costs associated with the $L$, $R$, and $S$ segments, respectively. The derivation of these costs follows.

Using \eqref{eq: objective_functional}, the total cost $C_L$ of a left turn as a function of the angle of the turn ($\phi_L$) can be obtained by setting $u_L = U_L = \frac{1}{r_L}$ and $u_R = 0$ as
\begin{align}
\begin{split}
    C_L &= \int_{0}^{t_f} \left(1 + \mu_L U_L \right) dt \\
    &= \left(1 + \mu_L U_L \right) t_f = \left(1 + \frac{\mu_L}{r_L} \right) \phi_L r_L = c_L \phi_L,
\end{split}
\end{align}
as the vehicle is considered to move at a unit speed. Therefore, $c_L = r_L + \mu_L$. Similarly, $c_R$ and $c_S$ can be obtained as $r_R + \mu_R$ and $1,$ respectively. Substituting the obtained expressions for $c_L, c_R,$ and $c_S$ in \eqref{eq: cost_difference_CCC}, $\Delta c$ can be simplified as
\begin{align}
\begin{split}
    \Delta c &= 2 (r_L + \mu_L + r_R + \mu_R) \delta - 2 (r_L + r_R) (1 - \cos{\delta}) \\
    &> 2 (r_L + r_R) \delta - 2 (r_L + r_R) (1 - \cos{\delta}) \\
    &= 2 (r_L + r_R) \left(\delta + \cos{\delta} - 1 \right),
\end{split}
\end{align}
as $\mu_L, \mu_R \geq 0$ with $\mu_L + \mu_R > 0$.

Consider the function $g (\delta) := \delta + \cos{\delta} - 1.$ This function is positive $\forall \delta \in \left(0, \frac{\pi}{2} \right)$ as
\begin{enumerate}
    \item $g(0) = 0$, and
    \item $\frac{d g}{d \delta} = 1 - \sin{\delta} > 0 \,\forall\, \delta \in [0, \frac{\pi}{2}),$ which implies that $g$ is strictly increasing in this range.
\end{enumerate}
Therefore, $\Delta c > 0 \,\forall\, \delta \in \left(0, \frac{\pi}{2} \right).$ Hence, the $L_\delta R_\pi L_\delta$ subpath of the $L_\alpha R_\pi L_\gamma$ path can be replaced with an $R_{\frac{\pi}{2} - \delta} S_{l_S} R_{\frac{\pi}{2} - \delta}$ path at a lower cost. Therefore, the $L_\alpha R_\pi L_\gamma$ path is not optimal. A similar argument can be made for an $R_\alpha L_\pi R_\gamma$ path. Hence, for $e = 0$, a $CCC$ path is non-optimal.
\end{proof}
\begin{proposition}
For $e = 0,$ the optimal path candidates are $C$ and $CC$, wherein the angle of each segment is at most $\pi.$
\end{proposition}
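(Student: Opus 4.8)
The plan is to combine the structural facts already established for the $e=0$ case with a short phase‑portrait estimate of the arc angles. The starting point is the observation made just before Theorem~\ref{theorem: CCC_nonoptimality}: when $e=0$ no $S$ segment can occur in an optimal path, so any optimal path is a finite concatenation of $C$ segments of alternating type. Hence it remains only to bound the number of segments by two and each arc angle by $\pi$. Throughout it suffices to treat non‑trivial paths, since by the definition in the preceding Remark a trivial path reduces to a shorter one (or to a point) after deleting its zero‑length subpaths.

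For the segment count I would invoke the principle of optimality: every subpath of an optimal path is itself optimal for its own boundary configuration. If a non‑trivial optimal path had three or more $C$ segments, then three consecutive ones would form a non‑trivial $CCC$ subpath (necessarily $LRL$ or $RLR$, since turn segments alternate), which would then have to be optimal, contradicting Theorem~\ref{theorem: CCC_nonoptimality}. Hence a non‑trivial optimal path is one of $L$, $R$, $LR$, $RL$, i.e. a $C$ or $CC$ path.

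For the angle bound, consider an $L$ segment. There $\dot\alpha = U_L = 1/r_L > 0$, so $\alpha$ is strictly monotone; setting $\psi := \alpha - \phi$ we have $\dot\psi = 1/r_L$ and, by \eqref{eq: adjoint_equations}, $\dot\beta = \lambda\sin\psi$, so $d\beta/d\psi = r_L\lambda\sin\psi$. Combined with the $e=0$ phase‑portrait relation $(\beta/r_L)^2 + \dot\beta^2 = \lambda^2$ from \eqref{eq: equations_phase_portrait_e_0} (with $\lambda>0$ by Lemma~\ref{lemma: e_0_lambda_greater_than_zero}), this forces $\beta = -r_L\lambda\cos\psi$ along the segment. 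By Lemma~\ref{lemma: optimal_control_actions} with $e=0$, $\beta<0$ in the interior of the segment, hence $\beta\le 0$ on its closure by continuity; therefore $\cos\psi\ge 0$ throughout, so the monotone function $\psi$ cannot leave the length‑$\pi$ interval on which $\cos\psi\ge 0$ containing its initial value. Consequently the total change of $\alpha$, i.e. the arc angle, is at most $\pi$. The mirror computation on an $R$ segment ($\alpha$ decreasing, $\beta\ge 0$, radius $r_R$) gives the same bound, and for a $CC$ path the $C$–$C$ transition occurs at $\beta=0$ (Lemma~\ref{lemma: CC_inflection_point}), i.e. at $\cos\psi=0$, consistently with the estimate. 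Applying this to each segment of the $C$ or $CC$ path yields the claim.

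The only delicate point is this arc‑angle estimate: one must verify that $\beta$ cannot change sign within a single $C$ segment — so that $\psi$ stays trapped in one lobe of $\{\cos\psi\ge 0\}$ — which rests on the fact that the segment's endpoints, where $\beta$ need not vanish, still satisfy $\beta\le 0$ (resp. $\beta\ge 0$) by continuity together with Lemma~\ref{lemma: optimal_control_actions}. Everything else is bookkeeping: the segment count from Theorem~\ref{theorem: CCC_nonoptimality} via the principle of optimality, and the reduction of trivial to non‑trivial paths.
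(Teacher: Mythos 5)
Your proposal is correct and follows essentially the same route the paper takes implicitly: the paper states this proposition without a separate proof, as a summary of the observation that no $S$ segment can occur when $e=0$, of Theorem~\ref{theorem: CCC_nonoptimality} (which excludes a third $C$ segment), and of the phase-portrait parameterization $\beta=-r_L\lambda\cos(\alpha-\phi)$ (resp.\ $\beta=r_R\lambda\cos(\alpha-\phi)$) that confines the heading change of each arc to at most $\pi$. Your write-up simply supplies these details explicitly, including the correct handling of the sign of $\beta$ within a $C$ segment and the transfer of the $e=0$ extremal structure to the $CCC$ subpath, so it matches the paper's intended argument.
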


\subsection{Candidate paths for $\lambda < e = 1$}

When $e = 1,$ \eqref{eq: beta_betadot_equation} can be modified into three equations depending on $\beta$ as
\begin{align} \label{eq: equations_phase_portrait_e_1}
\begin{split}
    &f \left(\beta (t), \dot{\beta} (t) \right) \\
    &= \begin{cases}
    \left(\frac{r_L + \mu_L}{r_L} + \frac{\beta (t)}{r_L} \right)^2 + \dot{\beta}^2 (t) = \lambda^2, & \beta (t) < -\mu_L \\
    \dot{\beta}^2 (t) = \lambda^2 - 1, & -\mu_L \leq \beta (t) \leq \mu_R \\
    \left(\frac{r_R + \mu_R}{r_R} - \frac{\beta (t)}{r_R} \right)^2 + \dot{\beta}^2 (t) = \lambda^2, & \beta (t) > \mu_R
    \end{cases}.
\end{split}
\end{align}
If $\lambda < 1,$ $\dot{\beta} (t)$ does not have a real solution for $-\mu_L \leq \beta \leq \mu_R$; 
hence, the initial condition for $\beta$ cannot lie in $[-\mu_L, \mu_R]$. The following lemma will also establish that for any other initial condition, $\beta (t) \notin [-\mu_L, \mu_R]$ for any $t > 0$.
\begin{lemma} \label{lemma: beta_justification_e_1_lambda_less_than_1}
For $\lambda < e = 1,$ if $\beta (0) < -\mu_L,$ then $\beta (t) < -\mu_L, \,\forall\, t > 0$. If $\beta (0) > \mu_R,$ then $\beta (t) > \mu_R, \,\forall\, t > 0.$
\end{lemma}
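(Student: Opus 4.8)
The plan is to argue by a continuity/trapping argument on the scalar ODE governing $\beta$, using the phase portrait described by \eqref{eq: equations_phase_portrait_e_1}. I would treat the two claims symmetrically and prove only the first; the second is identical after swapping the roles of $L$ and $R$ (and of $-\mu_L$ and $\mu_R$). So assume $\beta(0) < -\mu_L$. The key structural fact is the one already noted in the text: for $\lambda < 1$, the middle branch $\dot\beta^2 = \lambda^2 - 1 < 0$ of \eqref{eq: equations_phase_portrait_e_1} has no real solution, so the segment of the phase curve over $-\mu_L \le \beta \le \mu_R$ is empty. Concretely, on the region $\beta < -\mu_L$ the pair $(\beta,\dot\beta)$ must satisfy $\big(\tfrac{r_L+\mu_L}{r_L} + \tfrac{\beta}{r_L}\big)^2 + \dot\beta^2 = \lambda^2$, i.e. it lies on an ellipse (a circle after rescaling $\beta$) centered at $\beta = -(r_L+\mu_L)$ with ``radius'' $\lambda r_L$ in the $\beta$-direction; the rightmost point of that ellipse is at $\beta = -(r_L+\mu_L) + \lambda r_L = -\mu_L - (1-\lambda)r_L < -\mu_L$.

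First I would show that $\beta$ cannot reach the value $-\mu_L$ in finite time while staying in the left branch. Suppose, for contradiction, that there is a first time $\tau > 0$ with $\beta(\tau) = -\mu_L$ and $\beta(t) < -\mu_L$ for $t \in [0,\tau)$. By continuity the phase point $(\beta(t),\dot\beta(t))$ for $t<\tau$ lies on the left-branch ellipse, and letting $t \uparrow \tau$ forces $(\beta(\tau),\dot\beta(\tau))$ onto the closure of that ellipse. But $\beta(\tau) = -\mu_L$ exceeds the rightmost abscissa $-\mu_L - (1-\lambda)r_L$ of the ellipse, a contradiction since $1-\lambda > 0$. (Equivalently: on the left branch, evaluating the constraint at $\beta = -\mu_L$ would require $\dot\beta^2 = \lambda^2 - 1 < 0$.) Hence $\beta(t) < -\mu_L$ cannot fail at any first time, so it holds for all $t > 0$. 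The same contradiction also rules out $\beta$ jumping across the forbidden strip, because $\beta$ is continuous (as established in Lemma~\ref{lemma: optimal_control_actions} and used throughout), so it cannot pass from $\beta < -\mu_L$ to $\beta > \mu_R$ without taking some value in $[-\mu_L,\mu_R]$, which we have just excluded. The argument for the case $\beta(0) > \mu_R$ is word-for-word the same with the right-branch ellipse, whose leftmost abscissa is $\mu_R + (1-\lambda)r_R > \mu_R$.

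The main obstacle, such as it is, is being careful about the ``first exit time'' bookkeeping: one must make sure that the trajectory really is forced onto the left-branch ellipse on the whole interval $[0,\tau)$ and not merely on a subinterval, which follows because the region $\{\beta < -\mu_L\}$ is governed solely by the left-branch equation of \eqref{eq: equations_phase_portrait_e_1} and $\beta$ stays in that region by assumption on $[0,\tau)$. A secondary point to state cleanly is that $\lambda > 0$ here (true since $\lambda < 1$ but $\lambda = 0$ would make the left-branch equation $\big(\tfrac{r_L+\mu_L+\beta}{r_L}\big)^2 = -\dot\beta^2$ force $\beta \equiv -(r_L+\mu_L)$, which is consistent; however the genuinely needed inequality is just $1-\lambda>0$, so no separate treatment of $\lambda=0$ is required). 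Once the strip $[-\mu_L,\mu_R]$ is shown to be unreachable, the conclusion is immediate.
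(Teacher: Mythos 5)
Your proposal is correct and is essentially the paper's argument: the paper likewise observes that $\beta(t)=-\mu_L$ (resp. $\mu_R$) is incompatible with \eqref{eq: equations_phase_portrait_e_1} when $\lambda<1$, since it would force $1\le 1+\dot\beta^2(t)=\lambda^2<1$, and then concludes by continuity of $\beta$ that the trajectory stays on its initial side. Your first-exit-time bookkeeping with the rightmost abscissa of the left-branch ellipse is just a slightly more elaborate packaging of that same one-line contradiction, which you yourself note parenthetically.
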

\begin{proof}
Suppose $\beta(0) + \mu_L <0$; should $\beta(t)+\mu_L = 0$ for any $t$, equation \eqref{eq: equations_phase_portrait_e_1} implies that  $1 \leq 1 + \dot \beta^2 (t) = \lambda^2 <1$, a contradiction; hence, $\beta(t)+\mu_L <0$ for all $t$ by continuity of $\beta$. A similar reasoning can be used to show that $\beta(0) -\mu_R >0 \implies \beta(t)-\mu_R >0$ for all $t>0$.
\end{proof}

The phase portrait obtained for $\lambda < e = 1$ using \eqref{eq: equations_phase_portrait_e_1} and the previous observations is shown in Fig.~\ref{fig: phase_portrait_e_1_lambda_less_than_1}. Using Lemma~\ref{lemma: beta_justification_e_1_lambda_less_than_1} and Fig.~\ref{fig: phase_portrait_e_1_lambda_less_than_1}, it can be observed that the only candidate path is $C$ for $\lambda < e = 1$.
\begin{figure}[htb!]
    \centering
    \includegraphics[width = 0.9\linewidth]{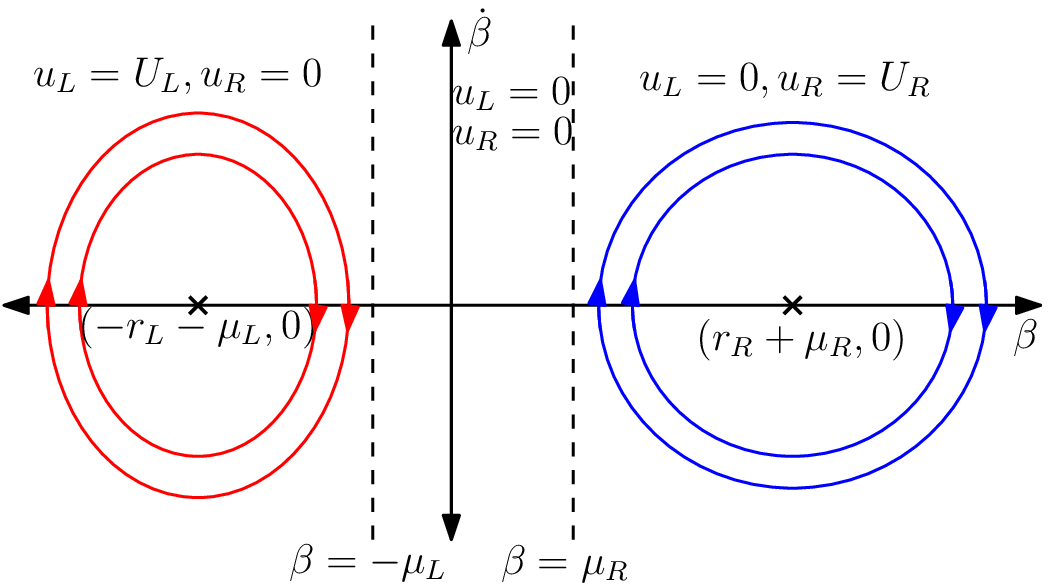}
    \caption{Phase portrait of $\beta$ for $\lambda < e = 1$}
    \label{fig: phase_portrait_e_1_lambda_less_than_1}
\end{figure}

\subsection{Candidate paths for $\lambda = e = 1$}

The equation corresponding to $\beta (t) < -\mu_L$ in \eqref{eq: equations_phase_portrait_e_1} correspond to an ellipse, whose origin lies on the $\beta-$axis. The intersection points of this ellipse with the $\beta-$axis are at $\beta = (\pm \lambda - 1) r_L - \mu_L.$
For $\lambda = 1,$ the intersection points are at $\beta = -\mu_L$ and $\beta = -2 r_L - \mu_L$. Hence, this ellipse is tangential to the line $\beta = -\mu_L.$ Similarly, the ellipse corresponding to $\beta (t) > \mu_R$ can be observed to be tangential to the line $\beta = \mu_R.$ It should also be noted that when $\lambda = 1,$ $\dot{\beta} (t) = 0$ for $-\mu_L \leq \beta (t) \leq \mu_R$ from \eqref{eq: equations_phase_portrait_e_1}. Using these observations and \eqref{eq: equations_phase_portrait_e_1}, the phase portrait for $\beta$ for $\lambda = e = 1$ can be obtained as shown in Fig.~\ref{fig: phase_portrait_e_1_lambda_1}.
The candidate paths for $\lambda = e = 1$ can be obtained using the following lemma.

\begin{figure}[htb!]
    \centering
    \includegraphics[width = 0.9\linewidth]{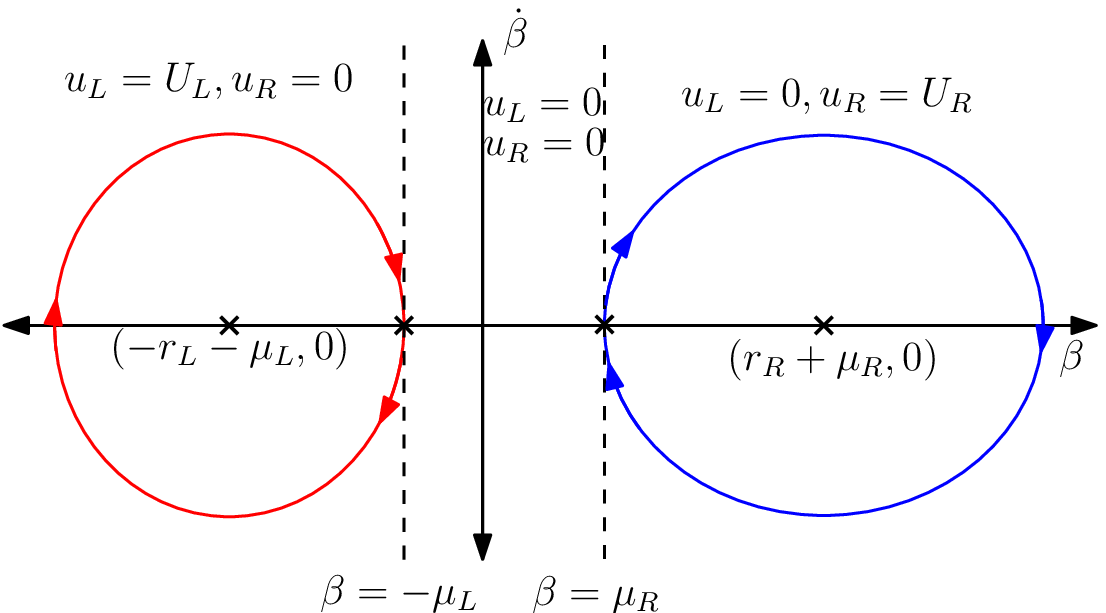}
    \caption{Phase portrait of $\beta$ for $\lambda = e = 1$}
    \label{fig: phase_portrait_e_1_lambda_1}
\end{figure}

\begin{lemma}
For $\lambda = e = 1,$ the optimal path is $LSL,$ $RSR$, or a degenerate path of $LSL$ and $RSR$ paths.
\end{lemma}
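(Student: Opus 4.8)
The plan is to determine the segment sequence of any optimal path directly from the phase portrait of $\beta$ in Fig.~\ref{fig: phase_portrait_e_1_lambda_1}, using two structural facts about the case $\lambda = e = 1$. First, the middle branch of \eqref{eq: equations_phase_portrait_e_1} gives $\dot\beta(t) = 0$ whenever $-\mu_L \le \beta(t) \le \mu_R$; hence $\beta$ is \emph{constant} along every $S$ segment. Second, as already noted above, the left ellipse of \eqref{eq: equations_phase_portrait_e_1} meets the strip $\{-\mu_L \le \beta \le \mu_R\}$ only at the tangency point $(\beta,\dot\beta) = (-\mu_L,0)$, and the right ellipse meets it only at $(\mu_R,0)$. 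Combining these with Lemma~\ref{lemma: optimal_control_actions}, every junction between a turn segment and a straight segment must occur at $(-\mu_L,0)$ when the turn is an $L$ and at $(\mu_R,0)$ when it is an $R$.

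Next I would show that an optimal path uses turns of only one handedness. Since $e = 1 > 0$, Lemma~\ref{lemma: CC_inflection_point} forbids any $LR$ or $RL$ subpath, so if an optimal path contained both an $L$ segment and an $R$ segment they would be separated by some $S$ segment; but the $L$-side end of that $S$ segment forces $\beta = -\mu_L$ there while the $R$-side end forces $\beta = \mu_R$, contradicting the constancy of $\beta$ on the $S$ segment because $-\mu_L \ne \mu_R$ (here $\mu_L + \mu_R > 0$). Hence every optimal path is a concatenation of $S$ and $L$ segments only, or of $S$ and $R$ segments only; assume the former without loss of generality.

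It remains to bound the number of segments. In a non-trivial path consecutive turns of the same type merge into one, so $L$ and $S$ segments must alternate, and it suffices to exclude the pattern $SLS$. Such an $L$ segment would both start and end at $(-\mu_L,0)$; on the left ellipse this means $\alpha - \phi$, which through $\dot\beta = \lambda\sin(\alpha-\phi)$ from \eqref{eq: adjoint_equations} (together with \eqref{eq:eqn13}) parametrizes the ellipse, returns to its starting value modulo $2\pi$, so the segment turns through a nonzero multiple of $2\pi$. Deleting this loop leaves a path with the same initial and final configurations and strictly smaller cost, contradicting optimality. Therefore an optimal path has at most one $S$ segment, flanked by at most one $L$ segment on each side, i.e.\ it is $LSL$ or one of its degenerate forms; the right-handed case gives $RSR$ and its degenerations, proving the lemma. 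I expect the main obstacle to be making the two phase-portrait facts airtight — especially that an $L\!\to\!S$ transition can occur only at the tangency point $(-\mu_L,0)$ and that an $S$ segment genuinely freezes $\beta$ — since everything else follows by combining the no-$LR/RL$ rule of Lemma~\ref{lemma: CC_inflection_point} with this frozen-$\beta$ property.
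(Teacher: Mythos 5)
Your proof is correct, and it rests on the same machinery as the paper's — the tangency of the two ellipses to the lines $\beta = -\mu_L$ and $\beta = \mu_R$, the fact that $\dot{\beta} \equiv 0$ on the strip $-\mu_L \le \beta \le \mu_R$, and the observation that the $L$ segment of an $SLS$ path must turn through a multiple of $2\pi$ — but it is organized differently in one place. The paper conditions on the initial value $\beta(0)$ and argues forward invariance: $\beta(0)\le-\mu_L$ implies $\beta(t)\le-\mu_L$ for all $t$, so only $L$ and $S$ segments can ever appear (symmetrically for $\beta(0)\ge\mu_R$, and a pure $S$ if $\beta(0)$ lies in the open strip), which makes single-handedness automatic. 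You instead obtain single-handedness from junction constraints: no direct $LR$/$RL$ junction because $e=1>0$ (Lemma~\ref{lemma: CC_inflection_point}), and no $L$--$S$--$R$ concatenation because $\beta$ is frozen on the $S$ segment yet would have to equal $-\mu_L$ at one end and $\mu_R$ at the other, impossible since $\mu_L+\mu_R>0$. Both routes are sound; yours has the small merit of making the exclusion of mixed-handedness explicit and of spelling out why the $2\pi$ turn kills an $SLS$ path (delete the full loop to get a strictly cheaper path joining the same configurations), a step the paper asserts without elaboration. From alternation plus the exclusion of $SLS$, the conclusion that the path is $LSL$, $RSR$, or a degeneration thereof is identical in both arguments.
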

\begin{proof}
Let $\beta (0) \leq -\mu_L.$ From \eqref{eq: equations_phase_portrait_e_1} and Fig.~\ref{fig: phase_portrait_e_1_lambda_1}, if $\beta (t) < -\mu_L,$ the evolution of $\beta$ corresponds to an ellipse that is tangential to the line $\beta = -\mu_L$ and lies to the left of $\beta = -\mu_L$, and if $\beta (t) = -\mu_L,$ $\dot{\beta} (t) = 0$ since $\lambda = e = 1.$ Therefore, $\beta (0) \leq -\mu_L \implies \beta (t) \leq -\mu_L \,\forall\, t > 0.$
As $\beta (t) = -\mu_L \implies \dot{\beta} (t) = 0,$ $\beta (t)$ can be identically equal to $-\mu_L$ over a time interval. From Lemma~\ref{lemma: optimal_control_actions}, $\beta \equiv -\mu_L$ (with $e = 1$) corresponds to an $S$ segment. Since $\beta (t) < -\mu_L$ corresponds to an $L$ segment and $\beta (t) \equiv -\mu_L$ corresponds to an $S$ segment, the optimal path has alternate $L$ and $S$ segments using the previous observations.

Consider an $SLS$ path. Here, $\beta (0) = -\mu_L,$ $\beta (t_f) = -\mu_L.$ 
Since $\dot{\beta} = \lambda \sin{\left(\alpha (t) - \phi \right)},$ and at the inflection points, $\beta = -\mu_L$ and $\dot{\beta} = 0$, the angle of the $L$ segment is a multiple of $2 \pi.$ Hence, an $SLS$ path is not optimal. Therefore, if $\beta (0) \leq -\mu_L,$ the candidate paths are $S, L, LS, SL,$ and $LSL.$

Using a similar argument for $\beta (0) \geq \mu_R,$ the candidate paths can be obtained as $S, R, RS, SR,$ and $RSR$. If $\beta (0) \in (-\mu_L, \mu_R),$ $\beta (t) = \beta (0) \,\forall\, t > 0$ as $\dot{\beta} = 0$ from \eqref{eq: equations_phase_portrait_e_1}. Using Lemma~\ref{lemma: optimal_control_actions}, the optimal path in this scenario is $S$.
\end{proof}

\subsection{Candidate paths for $\lambda > e = 1$}

As the intersection points of the ellipse corresponding to $\beta (t) < -\mu_L$ with the $\beta-$axis are at $\beta = (\pm \lambda - 1) r_L - \mu_L$, 
for $\lambda > 1,$ one of the intersection points lies to the right of the line $\beta = -\mu_L$. Therefore, the ellipse corresponding to $\beta (t) < -\mu_L$ intersects with the line $\beta = -\mu_L$ at two points. The coordinates of these two points can be obtained from \eqref{eq: equations_phase_portrait_e_1} as $(-\mu_L, \pm \sqrt{\lambda^2 - 1})$. Similarly, the ellipse corresponding to $\beta (t) > \mu_R$ can be observed to intersect the line $\beta = \mu_R$ at two points, whose coordinates are given by $(\mu_R, \pm \sqrt{\lambda^2 - 1})$. Moreover, for $-\mu_L \leq \beta (t) \leq \mu_R,$ $\dot{\beta} (t) = \pm \sqrt{\lambda^2 - 1}$ from \eqref{eq: equations_phase_portrait_e_1}. Using these observations and \eqref{eq: equations_phase_portrait_e_1}, the phase portrait for $\beta$ for $\lambda > e = 1$ can be obtained as shown in Fig.~\ref{fig: phase_portrait_e_1_lambda_greater_than_1}. From this figure, the following proposition follows.

\begin{figure}[htb!]
    \centering
    \includegraphics[width = \linewidth]{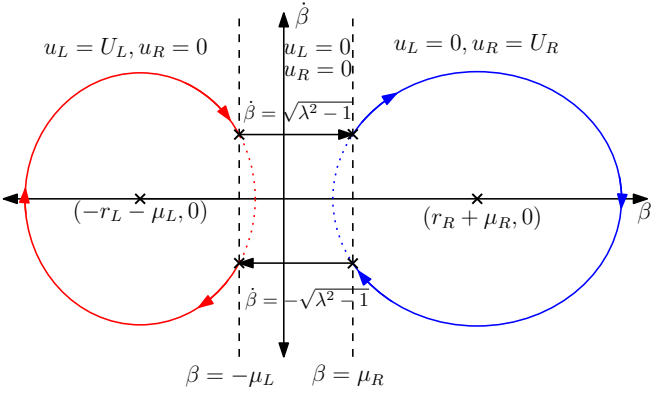}
    \caption{Phase portrait of $\beta$ for $\lambda > e = 1$}
    \label{fig: phase_portrait_e_1_lambda_greater_than_1}
\end{figure}

\begin{proposition} \label{prop: optimal_path_sequence_lambda_greater_than_1}
The segments of an optimal path for $\lambda > e = 1$ is a cyclic permutation of $L, S_1, R,$ and $S_2$ segments, wherein $S_1$ and $S_2$ denote straight line segments with $\dot{\beta} (t) = \sqrt{\lambda^2 - 1}$ and $\dot{\beta} (t) = -\sqrt{\lambda^2 - 1},$ respectively. 
\end{proposition}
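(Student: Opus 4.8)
The plan is to read off the admissible transitions between segment types directly from the phase portrait in Fig.~\ref{fig: phase_portrait_e_1_lambda_greater_than_1}, and then argue that any optimal trajectory of $\beta$ must traverse the four regions $\{\beta < -\mu_L\}$, $\{\beta = $ interior with $\dot\beta = +\sqrt{\lambda^2-1}\}$, $\{\beta > \mu_R\}$, $\{\beta = $ interior with $\dot\beta = -\sqrt{\lambda^2-1}\}$ in a fixed cyclic order. First I would set up the bookkeeping: by Lemma~\ref{lemma: optimal_control_actions}, $\beta(t) < -\mu_L$ gives an $L$ arc, $\beta(t) > \mu_R$ gives an $R$ arc, and $-\mu_L \le \beta(t) \le \mu_R$ gives an $S$ segment; since $e=1$ and $\lambda > 1$, \eqref{eq: equations_phase_portrait_e_1} forces $\dot\beta(t) = \pm\sqrt{\lambda^2-1} \ne 0$ throughout any such $S$ segment, so $\beta$ is \emph{strictly monotone} on the strip $[-\mu_L,\mu_R]$ and cannot dwell there; hence the strip is crossed, not rested in, and each crossing is labelled $S_1$ (if $\dot\beta = +\sqrt{\lambda^2-1}$) or $S_2$ (if $\dot\beta = -\sqrt{\lambda^2-1}$).

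Next I would establish the local transition rules at the two switching lines $\beta = -\mu_L$ and $\beta = \mu_R$. Because $\beta$ is $C^0$ and $\dot\beta$ is determined (up to sign) by the region, the trajectory on the ellipse $\{\beta < -\mu_L\}$ can only leave through one of the two intersection points $(-\mu_L, \pm\sqrt{\lambda^2-1})$; at $(-\mu_L, +\sqrt{\lambda^2-1})$ continuity of $\dot\beta$ forces the trajectory to continue into the strip with $\dot\beta = +\sqrt{\lambda^2-1}$, i.e. an $L$ segment is followed by an $S_1$ segment (and conversely an $S_1$ approaching $\beta = -\mu_L$ from the right with $\dot\beta > 0$ would be re-entering the $L$ ellipse, which is the backward direction). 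Carrying this out at all four intersection points $(-\mu_L, \pm\sqrt{\lambda^2-1})$ and $(\mu_R, \pm\sqrt{\lambda^2-1})$ gives exactly the cyclic adjacencies $L \to S_1 \to R \to S_2 \to L$: on the strip with $\dot\beta = +\sqrt{\lambda^2-1}$, $\beta$ increases from $-\mu_L$ to $\mu_R$, so $S_1$ is followed by $R$; the $R$ ellipse can then only be exited at $(\mu_R, -\sqrt{\lambda^2-1})$, continuing as $S_2$; and $S_2$ decreases $\beta$ back to $-\mu_L$, continuing as $L$. No transition $L \to S_2$, $S_1 \to L$, etc., is possible without violating continuity of $\dot\beta$ or reversing the sense of traversal of an ellipse, which is impossible since $\dot\beta = \lambda\sin(\alpha - \phi)$ evolves continuously with $\alpha$.

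Finally I would note that the converse direction (e.g. leaving the $L$ ellipse at the \emph{lower} intersection point $(-\mu_L,-\sqrt{\lambda^2-1})$) is excluded because that point is reached only by the trajectory entering the $L$ ellipse from an $S_2$ segment, not leaving it; on the ellipse itself $\beta$ is not monotone, but the direction of circulation is fixed by the dynamics, so each ellipse is traversed with $\beta$ first decreasing then increasing (or the mirror), entering at one intersection point and exiting at the other. Assembling these local rules, the global sequence of segment types is a walk on the $4$-cycle $L \to S_1 \to R \to S_2 \to L$, i.e. a contiguous subsequence of the infinite periodic word $(L\,S_1\,R\,S_2)^\infty$, which is precisely the statement that the optimal path is a cyclic permutation of $L, S_1, R, S_2$. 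I expect the main obstacle to be the careful justification that each ellipse is exited at the ``opposite'' intersection point from where it was entered — equivalently, that $\beta$ cannot turn around inside the strip $[-\mu_L,\mu_R]$ — which follows from strict monotonicity of $\beta$ on the strip, and that the direction of circulation on each ellipse is consistent with the direction of monotone crossing of the adjacent strips; this is a continuity/orientation argument that must be spelled out for all four junction points, but involves no computation beyond what is already in \eqref{eq: equations_phase_portrait_e_1}.
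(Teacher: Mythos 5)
Your proposal is correct and follows essentially the same route as the paper: the paper simply reads the proposition off the phase portrait of Fig.~\ref{fig: phase_portrait_e_1_lambda_greater_than_1} (built from \eqref{eq: equations_phase_portrait_e_1} and the junction points $(-\mu_L,\pm\sqrt{\lambda^2-1})$, $(\mu_R,\pm\sqrt{\lambda^2-1})$), while you spell out the continuity/monotonicity bookkeeping at those junctions that the figure encodes. The only point worth tightening is the circulation direction on the ellipses, which follows immediately from the fact that in the $(\beta,\dot{\beta})$ plane $\beta$ increases where $\dot{\beta}>0$ and decreases where $\dot{\beta}<0$, forcing clockwise traversal and hence exactly the exit points you assert.
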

\begin{corollary} \label{corr: CC_non-optimality}
For $\lambda > e = 1,$ a non-trivial $CC$ path is non-optimal.
\end{corollary}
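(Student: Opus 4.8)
The plan is to first reduce the statement to a claim about $LR$ and $RL$ subpaths, and then to read it off the structure already established in this subsection. A non-trivial $CC$ path consists of two consecutive turn segments of positive length; since two consecutive arcs of the \emph{same} signed curvature concatenate into a single arc, the configurations $LL$ and $RR$ are not genuine two-segment paths, so a non-trivial $CC$ path is necessarily an $LR$ or an $RL$ path. Thus it suffices to show that no optimal path for $\lambda > e = 1$ contains an $LR$ or $RL$ subpath.

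The quickest route is to invoke Lemma~\ref{lemma: CC_inflection_point}: an optimal path can contain an $LR$ or $RL$ subpath only if $e = 0$, because at the inflection point one needs $\beta(t^-) < -\mu_L e$ and $\beta(t^+) > \mu_R e$, and with $\mu_L,\mu_R \ge 0$, $\mu_L+\mu_R>0$, a nonzero $e$ would force $\beta$ to jump across the band $[-\mu_L e,\mu_R e]$ of positive width, contradicting continuity of $\beta$. Since here $e = 1 \ne 0$, no optimal path contains such a subpath, and therefore a non-trivial $CC$ path is non-optimal. Equivalently, one can argue directly from Proposition~\ref{prop: optimal_path_sequence_lambda_greater_than_1}: the segment sequence of an optimal path for $\lambda > e = 1$ is a cyclic permutation of $L, S_1, R, S_2$, so an $L$ segment is always immediately followed by $S_1$ and an $R$ segment by $S_2$ (or the path terminates); hence two turn segments are never adjacent in an optimal path, which is exactly the assertion. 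Unwinding this through the phase portrait of Fig.~\ref{fig: phase_portrait_e_1_lambda_greater_than_1}: to pass from the region $\beta < -\mu_L$ (an $L$ arc) to the region $\beta > \mu_R$ (an $R$ arc), the continuous trajectory of $\beta$ must cross the strip $-\mu_L \le \beta \le \mu_R$, on which $\dot\beta = \pm\sqrt{\lambda^2-1}$ is a finite nonzero constant; the crossing therefore takes a time interval of length $(\mu_L+\mu_R)/\sqrt{\lambda^2-1} > 0$, during which $\beta \in [-\mu_L,\mu_R]$ and, by Lemma~\ref{lemma: optimal_control_actions}, the path traces a straight-line segment of \emph{positive} length that lies strictly between the two turns. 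So the path is not merely $CC$.

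I expect no real obstacle here: the corollary is essentially immediate from either Lemma~\ref{lemma: CC_inflection_point} or Proposition~\ref{prop: optimal_path_sequence_lambda_greater_than_1}. The only point that needs a word of care is the sub-case where exactly one of $\mu_L,\mu_R$ vanishes; the band $[-\mu_L,\mu_R]$ still has positive width since $\mu_L+\mu_R>0$, so the argument is unaffected, and the standing hypothesis $\mu_L+\mu_R>0$ is genuinely what makes the $CC$ path non-optimal here (the conclusion would fail in the classical symmetric, penalty-free setting). The main drafting choice is which of the two prior results to cite and stating cleanly why ``non-trivial $CC$'' reduces to the $LR$/$RL$ case.
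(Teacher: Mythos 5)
Your proposal is correct. The paper gives no separate proof of this corollary: it is stated as an immediate consequence of Proposition~\ref{prop: optimal_path_sequence_lambda_greater_than_1} and the phase portrait of Fig.~\ref{fig: phase_portrait_e_1_lambda_greater_than_1}, i.e., exactly your phase-portrait reading --- to pass from the region $\beta < -\mu_L$ (an $L$ arc) to the region $\beta > \mu_R$ (an $R$ arc), the continuous trajectory of $\beta$ must traverse the band $[-\mu_L,\mu_R]$ of positive width at the finite constant rate $\pm\sqrt{\lambda^2-1}$, which forces an intermediate $S$ segment of positive length. Your first route, via Lemma~\ref{lemma: CC_inflection_point}, is a mild but genuine variation: it rules out $LR$ and $RL$ junctions for \emph{any} optimal path with $e>0$, independently of the value of $\lambda$, so it is slightly more general than the sub-case-specific phase-portrait argument, while the phase-portrait argument gives the extra quantitative information (the length $(\mu_L+\mu_R)/\sqrt{\lambda^2-1}$ of the forced $S$ segment) that the paper later exploits in Lemma~\ref{lemma: length_S_segment_lambda_greater_than_1}. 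Your explicit reduction of non-trivial $CC$ to the $LR$/$RL$ case (since $LL$ and $RR$ merge into a single arc) is a point the paper leaves implicit; both of your arguments are sound, and your observation that the hypothesis $\mu_L+\mu_R>0$ is what drives the conclusion is accurate.
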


It should be noted that henceforth, both $S_1$ and $S_2$ segments will be denoted as an $S$ segment. Using the sequence of segments in an optimal path given in Proposition~\ref{prop: optimal_path_sequence_lambda_greater_than_1} and Fig.~\ref{fig: phase_portrait_e_1_lambda_greater_than_1}, it is easy to deduce whether an $S$ segment denotes an $S_1$ or $S_2$ segment in an optimal path.

\begin{lemma} \label{lemma: angle_C_segment_lambda_greater_than_1}
For $\lambda > e = 1,$ the angle of the $C$ segment of a non-trivial optimal $SCS$ path is $2 \pi - 2 \cos^{-1}{\left(\frac{1}{\lambda} \right)}$.
\end{lemma}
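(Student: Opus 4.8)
The plan is to exploit the phase portrait in Fig.~\ref{fig: phase_portrait_e_1_lambda_greater_than_1} together with the identity $\dot\beta(t)=\lambda\sin(\alpha(t)-\phi)$ from \eqref{eq: adjoint_equations}. Consider a non-trivial optimal $SCS$ path; by Proposition~\ref{prop: optimal_path_sequence_lambda_greater_than_1} the middle $C$ segment is flanked by two straight segments, and from Fig.~\ref{fig: phase_portrait_e_1_lambda_greater_than_1} the two inflection points (entering and leaving the $C$ arc) must be the two intersection points of the relevant ellipse with the corresponding vertical line $\beta=-\mu_L$ (if $C=L$) or $\beta=\mu_R$ (if $C=R$). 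First I would treat the case $C=L$: at the first inflection point $\beta=-\mu_L$ and, tracing the direction of travel around the ellipse in the phase portrait, $\dot\beta=-\sqrt{\lambda^2-1}$ there (the arc leaves the line going ``down'' into the region $\beta<-\mu_L$), while at the second inflection point $\beta=-\mu_L$ again and $\dot\beta=+\sqrt{\lambda^2-1}$. I should double-check the signs against the arrows in the figure, but the point is simply that the two endpoints have $\dot\beta$ of opposite sign and equal magnitude $\sqrt{\lambda^2-1}$.

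Next I would convert this into a statement about the heading change. Since $\dot\beta=\lambda\sin(\alpha-\phi)$, at the two inflection times $t_1,t_2$ we get $\sin(\alpha(t_1)-\phi)=-\sqrt{\lambda^2-1}/\lambda$ and $\sin(\alpha(t_2)-\phi)=+\sqrt{\lambda^2-1}/\lambda$. Writing $\cos^{-1}(1/\lambda)=:\psi$, note $\sin\psi=\sqrt{\lambda^2-1}/\lambda$, so $\alpha(t_1)-\phi$ is an angle with sine $-\sin\psi$ and $\alpha(t_2)-\phi$ an angle with sine $+\sin\psi$. To pin down the total turn angle $\alpha(t_2)-\alpha(t_1)$ (which for a left turn is positive and equals the arc angle, the vehicle's heading increasing monotonically through the $L$ arc) I also need the sign of $\ddot\beta$, equivalently the sign of $\cos(\alpha-\phi)$, at the two endpoints; this is read off from \eqref{eq:eqn13} / \eqref{eq: equations_phase_portrait_e_1}, since on the arc $-\lambda\cos(\alpha-\phi)$ equals the positive quantity $(r_L+\mu_L+\beta)/r_L\cdot U_L + 1$-type expression, forcing $\cos(\alpha-\phi)<0$ on the interior of the $L$ arc. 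That places both $\alpha(t_1)-\phi$ and $\alpha(t_2)-\phi$ in the second/third quadrant appropriately, and a short trigonometric bookkeeping then gives $\alpha(t_2)-\alpha(t_1)=2\pi-2\psi=2\pi-2\cos^{-1}(1/\lambda)$.

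The case $C=R$ is symmetric: $\beta=\mu_R$ at both inflection points with $\dot\beta$ of opposite signs $\mp\sqrt{\lambda^2-1}$, $\alpha$ decreases monotonically through the $R$ arc, and the same computation gives an arc angle of $2\pi-2\cos^{-1}(1/\lambda)$. Finally I would note that the ``non-trivial'' hypothesis is what rules out the degenerate possibility that the $C$ arc uses the \emph{other} pair of matchings (e.g.\ entering and leaving at the \emph{same} phase-portrait point, giving arc angle $0$), and that no intermediate return to $\beta=-\mu_L$ is possible because that would split $C$ into two $C$ segments separated by an $S$, contradicting the $SCS$ form. The main obstacle I anticipate is the sign/branch bookkeeping in the trigonometry — making sure the two inflection points correspond to heading angles that differ by exactly $2\pi-2\psi$ rather than $2\psi$ or $\pi-2\psi$ — and the cleanest way to resolve it is to argue directly from the phase portrait that the $C$ arc traverses the ellipse along the ``long way around'' (the portion with $\beta<-\mu_L$), whose angular extent in the $(\alpha-\phi)$ variable is precisely $2\pi$ minus the angular gap $2\psi$ cut off by the chord $\beta=-\mu_L$.
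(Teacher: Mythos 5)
Your proposal is correct and takes essentially the same route as the paper: the same endpoint values $\bigl(-\mu_L,\mp\sqrt{\lambda^2-1}\bigr)$ (resp. $\bigl(\mu_R,\pm\sqrt{\lambda^2-1}\bigr)$) from the phase portrait, the identity $\dot\beta=\lambda\sin(\alpha-\phi)$ together with the Hamiltonian relation fixing $\cos\bigl(\alpha(t_i)-\phi\bigr)=-1/\lambda$ (the paper packages this as the explicit parameterization $P_L$ of the ellipse), and monotonicity of $\alpha$ on the turn to select the ``long way around,'' giving $2\pi-2\cos^{-1}(1/\lambda)$. One small correction that does not affect the argument: $\cos(\alpha-\phi)<0$ does not hold on the entire interior of the $L$ arc (it fails on the far side of the ellipse, where $\beta<-r_L-\mu_L$); only the endpoint values, obtained by setting $\beta=-\mu_L$ (resp. $\beta=\mu_R$) in the relation, are needed.
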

\begin{proof}
Let a non-trivial $SLS$ path be optimal. Let the time instants corresponding to the first and second inflection points be $t_1$ and $t_2,$ respectively. Hence, $\beta (t_1) = \beta (t_2) = -\mu_L.$ Moreover, $\beta (t) < -\mu_L$ for $t \in (t_1, t_2)$ as an $L$ segment is traversed. From Fig.~\ref{fig: phase_portrait_e_1_lambda_greater_than_1}, $\beta$ and $\dot{\beta}$ at the two inflection points can be obtained as
\begin{align}
    (\beta (t_1), \dot{\beta} (t_1)) = \left(-\mu_L, -\sqrt{\lambda^2 - 1} \right), \label{eq: SLS_t1} \\
    (\beta (t_2), \dot{\beta} (t_2)) = \left(-\mu_L, \sqrt{\lambda^2 - 1} \right). \label{eq: SLS_t2}
\end{align}
Using the equation of the ellipse corresponding to $\beta (t) < -\mu_L$ from \eqref{eq: equations_phase_portrait_e_1}, and since $\dot{\beta} (t) = \lambda \sin{\left(\alpha (t) - \phi \right)},$ the coordinates $(\beta, \dot{\beta})$ of a point $P_L$ on the ellipse can be parameterized in terms of the heading angle $\alpha (t)$ as
\begin{align} \label{eq: point_left_ellipse}
    P_L = (-r_L -\mu_L - \lambda r_L \cos{\left(\alpha (t) - \phi \right)}, \lambda \sin{\left(\alpha (t) - \phi \right)}).
\end{align}
It should be noted that the above parameterization was chosen for $P_L (\beta, \dot{\beta})$ to ensure that differentiating $\beta (\alpha (t))$ with respect to $t$ yields the function $\dot{\beta} (\alpha (t))$.

Comparing \eqref{eq: SLS_t1} and \eqref{eq: SLS_t2} with \eqref{eq: point_left_ellipse},
\begin{align}
    \sin{\left(\alpha (t_1) - \phi \right)} = \frac{-\sqrt{\lambda^2 - 1}}{\lambda}, \,\, \cos{\left(\alpha (t_1) - \phi \right)} = \frac{-1}{\lambda}, \\
    \sin{\left(\alpha (t_2) - \phi \right)} = \frac{\sqrt{\lambda^2 - 1}}{\lambda}, \,\, \cos{\left(\alpha (t_2) - \phi \right)} = \frac{-1}{\lambda}.
\end{align}
Since $\dot{\alpha} (t) = U_L > 0$ over an $L$ segment, $\alpha (t) = \alpha (t_1) + U_L (t - t_1) > \alpha (t_1).$ Hence, the angles $\alpha (t_1) - \phi, \alpha (t_2) - \phi,$ and the angle of the $L$ segment ($\phi_L$) can be represented as shown in Fig.~\ref{fig: SLS_path_angle_left_turn}. Therefore, $\phi_L = 2 \pi - 2 \theta = 2 \pi - 2 \cos^{-1}{\left(\frac{1}{\lambda} \right)}$. Hence, $\phi_L \in (\pi, 2 \pi).$

\begin{figure}[htb!]
    \centering
    \includegraphics[width = 0.6\linewidth]{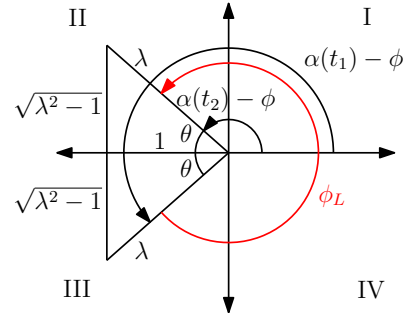}
    \caption{Angle of an $L$ segment for an optimal $SLS$ path for $\lambda > e = 1$}
    \label{fig: SLS_path_angle_left_turn}
\end{figure}

The angle of the $R$ segment of a non-trivial optimal $SRS$ path can be derived similarly. The coordinates of the first and second inflection points corresponding to time instants $t_1$ and $t_2$ on the phase portrait can be obtained as
\begin{align*}
    &(\beta (t_1), \dot{\beta} (t_1)) = \left(\mu_R, \sqrt{\lambda^2 - 1} \right), \\
    &(\beta (t_2), \dot{\beta} (t_2)) = \left(\mu_R, -\sqrt{\lambda^2 - 1} \right),
\end{align*}
since $\beta (t) > \mu_R$ for $t \in (t_1, t_2).$ A point $P_R$ on the ellipse corresponding to $\beta (t) > \mu_R$ from \eqref{eq: equations_phase_portrait_e_1} can be parameterized in terms of $\alpha (t)$ as
\begin{align*}
    P_R = (r_R + \mu_R + \lambda r_R \cos{\left(\alpha (t) - \phi \right)}, \lambda \sin{\left(\alpha (t) - \phi \right)}).
\end{align*}
Since $\dot{\alpha} (t) = -U_R < 0,$ $\alpha (t) = \alpha (t_1) - U_R (t - t_1) < \alpha (t_1).$ Using a similar construction as given in Fig.~\ref{fig: SLS_path_angle_left_turn}, the angle of the $R$ segment can be obtained as $\phi_R = 2 \pi - 2 \cos^{-1}{\left(\frac{1}{\lambda} \right)}$. Hence, $\phi_R \in (\pi, 2 \pi).$
\end{proof}

\begin{lemma} \label{lemma: length_S_segment_lambda_greater_than_1}
For $\lambda > e = 1,$ the length of the line segment of a non-trivial optimal $LSR$ or $RSL$ path is $\frac{\mu_L + \mu_R}{\sqrt{\lambda^2 - 1}}$.
\end{lemma}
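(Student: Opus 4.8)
The plan is to read off the boundary data of the middle $S$ segment from the phase portrait in Fig.~\ref{fig: phase_portrait_e_1_lambda_greater_than_1} and to integrate $\dot\beta$ across that segment. Consider first a non-trivial optimal $LSR$ path and let $t_1<t_2$ be the time instants of its two inflection points, so that an $L$ turn is executed on $(0,t_1)$, a straight segment on $(t_1,t_2)$, and an $R$ turn on $(t_2,t_f)$. By Proposition~\ref{prop: inflection_points_CS_SC}, the inflection point of the $LS$ subpath has $\beta(t_1)=-\mu_L$ and the inflection point of the $SR$ subpath has $\beta(t_2)=\mu_R$. On the straight segment $\beta(t)\in[-\mu_L,\mu_R]$, so \eqref{eq: equations_phase_portrait_e_1} forces $\dot\beta(t)=\pm\sqrt{\lambda^2-1}$ there; since the segment is the $S_1$ segment in the cyclic order $L,S_1,R,S_2$ of Proposition~\ref{prop: optimal_path_sequence_lambda_greater_than_1}, we have $\dot\beta(t)\equiv\sqrt{\lambda^2-1}$ on $[t_1,t_2]$. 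The hypothesis $\lambda>1$ makes $\sqrt{\lambda^2-1}>0$, so $\beta$ is strictly increasing on $[t_1,t_2]$ and the segment is traversed exactly once.

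Integrating the constant $\dot\beta$ over the straight segment gives
\begin{align*}
\mu_R-(-\mu_L)=\beta(t_2)-\beta(t_1)=\int_{t_1}^{t_2}\dot\beta(t)\,dt=\sqrt{\lambda^2-1}\,(t_2-t_1).
\end{align*}
Because the vehicle moves at unit speed, the length of the straight segment equals the elapsed time $t_2-t_1$, hence $l_S=\frac{\mu_L+\mu_R}{\sqrt{\lambda^2-1}}$. For a non-trivial optimal $RSL$ path the argument is identical up to signs: Proposition~\ref{prop: inflection_points_CS_SC} now gives $\beta(t_1)=\mu_R$ and $\beta(t_2)=-\mu_L$, the middle segment is the $S_2$ segment of Proposition~\ref{prop: optimal_path_sequence_lambda_greater_than_1} so $\dot\beta\equiv-\sqrt{\lambda^2-1}$, and integrating yields $-\mu_L-\mu_R=-\sqrt{\lambda^2-1}\,(t_2-t_1)$, so again $l_S=\frac{\mu_L+\mu_R}{\sqrt{\lambda^2-1}}$.

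The only delicate points are bookkeeping ones. First, one must justify that $\dot\beta$ is genuinely \emph{constant} (not merely bounded) on the straight segment: this is exactly the $-\mu_L\le\beta\le\mu_R$ branch of \eqref{eq: equations_phase_portrait_e_1}, consistent with $\dot\beta=\lambda\sin(\alpha(t)-\phi)$ from \eqref{eq: adjoint_equations} and $\alpha$ being constant on an $S$ segment. Second, one must pin down the sign of $\dot\beta$ on $S$, which is the one forced by the ordering $L,S_1,R$ (resp.\ $R,S_2,L$) in Proposition~\ref{prop: optimal_path_sequence_lambda_greater_than_1}; equivalently, $\beta$ must move monotonically from one threshold to the other. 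With $\lambda>1$ the denominator $\sqrt{\lambda^2-1}$ is nonzero, so the division is legitimate. No substantive obstacle is anticipated: once the phase-portrait data at the inflection points are in hand, the result is a one-line integration.
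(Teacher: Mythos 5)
Your proposal is correct and follows essentially the same route as the paper's own proof: you identify $\beta(t_1)=-\mu_L$ and $\beta(t_2)=\mu_R$ via Proposition~\ref{prop: inflection_points_CS_SC}, fix the sign $\dot\beta\equiv\sqrt{\lambda^2-1}$ on the $S$ segment using Proposition~\ref{prop: optimal_path_sequence_lambda_greater_than_1} and the phase portrait, and integrate the constant $\dot\beta$ with unit speed to get $l_S=\frac{\mu_L+\mu_R}{\sqrt{\lambda^2-1}}$. The only difference is that you spell out the sign bookkeeping and the $RSL$ case slightly more explicitly than the paper does, which is fine.
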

\begin{proof}
Consider a non-trivial optimal $LSR$ path. Let the time instants corresponding to the first and second inflection points be $t_1$ and $t_2,$ respectively. Hence, $\beta (t_1) = -\mu_L$ and $\beta (t_2) = \mu_R$ using Proposition~\ref{prop: inflection_points_CS_SC}. Using Proposition~\ref{prop: optimal_path_sequence_lambda_greater_than_1} and Fig.~\ref{fig: phase_portrait_e_1_lambda_greater_than_1}, $\dot{\beta} (t) = \sqrt{\lambda^2 - 1}$ for $t \in [t_1, t_2]$. Since $\dot{\beta}$ is a constant, $\beta (t) = \beta (t_1) + \sqrt{\lambda^2 - 1} (t - t_1)$ for $t \in [t_1, t_2].$ As $\beta (t_2) = \mu_R,$ and since the vehicle is considered to move at a unit speed,
\begin{align} \label{eq: length_line_segment_of_subpath}
    l_S = t_2 - t_1 = \frac{\beta (t_2) - \beta (t_1)}{\sqrt{\lambda^2 - 1}} = \frac{\mu_L + \mu_R}{\sqrt{\lambda^2 - 1}},
\end{align}
where $l_S$ is the length of the $S$ segment of the $LSR$ path. The length of the $S$ segment of an $RSL$ path can be similarly obtained as the same expression.
\end{proof}

\begin{lemma}
For $\lambda > e = 1,$ an optimal path contains at most five segments.
\end{lemma}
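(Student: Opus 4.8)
The plan is to use Proposition~\ref{prop: optimal_path_sequence_lambda_greater_than_1}, which says any optimal path for $\lambda > e = 1$ has segments forming a cyclic permutation of the block $L, S_1, R, S_2$. Concatenating copies of this cyclic pattern, an optimal path with more than five segments must contain, as a consecutive subpath, at least one full period — i.e. one of $LS_1RS_2$, $S_1RS_2L$, $RS_2LS_1$, or $S_2LS_1R$ — together with a repeated segment type appearing at both ends (for instance an $LS_1RS_2L$ subpath, or an $S_1RS_2LS_1$ subpath, etc.). The strategy is to show that each such six-segment-or-longer configuration admits a strictly cheaper replacement, forcing the optimal path to terminate within five segments. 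Equivalently, one shows that no optimal path can contain a subpath that ``completes a loop'' in the $\beta$ phase portrait of Fig.~\ref{fig: phase_portrait_e_1_lambda_greater_than_1}: traversing the closed orbit once brings $(\beta,\dot\beta)$ back to a previously visited point, and from there the continuation is determined by the same dynamics, so the path repeats unless it has already ended.

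The key steps, in order: First, reduce to showing that a configuration with a repeated segment type separated by a full cycle is non-optimal; by Proposition~\ref{prop: optimal_path_sequence_lambda_greater_than_1} any path of six or more segments contains such a subpath, so this suffices. Second, using Lemmas~\ref{lemma: angle_C_segment_lambda_greater_than_1} and~\ref{lemma: length_S_segment_lambda_greater_than_1}, observe that in the interior of a long optimal path every $C$ segment has the fixed angle $2\pi - 2\cos^{-1}(1/\lambda)$ and every $S$ segment the fixed length $(\mu_L+\mu_R)/\sqrt{\lambda^2-1}$ — the boundary-condition freedom is exhausted by the two end segments, so all middle segments are rigidly determined. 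Third, for the candidate offending subpath (say $LSRSL$, with the two outer $L$'s and middle $R$ all having the prescribed fixed angle and the two $S$'s the fixed length), compute the net geometric displacement and heading change it effects, and exhibit an explicitly shorter alternative — the natural candidates being to merge the two equal-angle turns of the same sense or to ``short-circuit'' the loop, analogous to the $L_\delta R_\pi L_\delta \to R S R$ replacement used in Theorem~\ref{theorem: CCC_nonoptimality}. Fourth, repeat the argument for each of the (finitely many) cyclic-rotation variants of the offending subpath; by the $L\leftrightarrow R$ and $S_1\leftrightarrow S_2$ symmetry of the phase portrait, only one or two genuinely distinct cases need detailed treatment.

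The main obstacle I expect is the third step: constructing the cheaper replacement path and verifying the strict cost inequality. In Theorem~\ref{theorem: CCC_nonoptimality} this worked because the middle turn had angle exactly $\pi$, which made the endpoints collinear with the turn centers and gave a clean closed-form length for the substitute $S$ segment; here the fixed turn angle $2\pi - 2\cos^{-1}(1/\lambda)$ is an awkward transcendental quantity depending on $\lambda$, so the geometry of the replacement and the resulting cost difference (which must be shown positive for all admissible $\lambda > 1$ and all $\mu_L,\mu_R \ge 0$) will require more care. A cleaner route, if available, is to argue purely on the phase portrait: since the middle segments of a long path trace the \emph{same} closed orbit, the portion of the path between two occurrences of identical $(\beta,\dot\beta)$ is a closed loop in the vehicle's configuration space only if the net heading change is a multiple of $2\pi$ and the net translation vanishes; one then shows this loop has strictly positive cost (it is a genuine closed Dubins-type maneuver) and can simply be excised, contradicting optimality. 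I would try the phase-portrait/excision argument first and fall back on the explicit replacement construction if the excised loop fails to return the configuration exactly.
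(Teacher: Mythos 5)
Your setup is right---reducing to the four six-segment words generated by the cyclic pattern of Proposition~\ref{prop: optimal_path_sequence_lambda_greater_than_1}, and noting via Lemmas~\ref{lemma: angle_C_segment_lambda_greater_than_1} and \ref{lemma: length_S_segment_lambda_greater_than_1} that the interior turns and interior $S$ segments are rigidly fixed by $\lambda$---but the decisive step is missing, and both routes you propose for it run into trouble. The excision idea fails for a reason you yourself flag: the block $LSRS$ (with equal $L$ and $R$ turn angles) returns the heading to its initial value but produces a nonzero net translation, so the piece of trajectory between two identical points of the $(\beta,\dot{\beta})$ orbit is not a closed loop in $(x,y,\alpha)$ and cannot be excised. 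The fallback---an explicit strictly-cheaper replacement in the spirit of the $L_\delta R_\pi L_\delta \to RSR$ construction of Theorem~\ref{theorem: CCC_nonoptimality}---is exactly the part you identify as ``the main obstacle'' and do not carry out, so as written the argument is incomplete.

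The idea the paper uses, which sidesteps any cost inequality entirely, is a rearrangement (commutation) argument. Because the $LSRS$ block preserves the heading, an $S$ segment appearing after it can be moved to appear before it: the same four interior segment parameters connect the same two configurations whether the free $S$ is traversed first or last, and the cost is unchanged. Applying this to a non-trivial $SLSRSL$ path converts it, at identical cost, into an $SSLSRL$ path, which ends in a non-trivial $RL$ subpath; by Corollary~\ref{corr: CC_non-optimality} a non-trivial $CC$ subpath is non-optimal for $\lambda > e = 1$, so the equal-cost rearranged path---and hence the original---is non-optimal. String reversal and the $L \leftrightarrow R$ symmetry dispose of the remaining three six-segment words. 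The point is that you do not need to exhibit a cheaper path yourself; you only need an equal-cost rearrangement that lands in a configuration already ruled out. I recommend restructuring your third step around this observation rather than attempting the transcendental cost comparison.
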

\begin{proof}
From Proposition~\ref{prop: optimal_path_sequence_lambda_greater_than_1}, four candidate six-segment paths can be obtained, which are $LSRSLS,$ $SRSLSR,$ $RSLSRS,$ and $SLSRSL.$ Consider a non-trivial optimal $SLSRSL$ path. Using Lemmas~\ref{lemma: angle_C_segment_lambda_greater_than_1} and \ref{lemma: length_S_segment_lambda_greater_than_1}, the $LSRS$ subpath is completely parameterized using $\lambda.$ In particular, the angles of the $L$ and $R$ segments of this subpath are equal. Consider an initial and final configuration connected by this $LSRS$ subpath such that
\begin{enumerate}
    \item The center of the circle corresponding to the $L$ segment is at the origin, and
    \item The initial heading angle is $\frac{3\pi}{2}$.
\end{enumerate}
As the $L$ and $R$ segments have an equal angle, the final heading angle is also $\frac{3\pi}{2}$. Moreover, any initial and final configuration connected by the considered $LSRS$ subpath can be transformed to the above-stated representation. It should be noted that the final position in this representation depends on the parameters $r_L, r_R, \mu_L, \mu_R,$ and $\lambda.$

The considered initial position ($P_i$), the obtained final position ($P_f$), and the generated $LSRS$ path are shown in Fig.~\ref{fig: LSRS_path}. An alternate $SLSR$ path can be used to connect the same initial and final configurations such that the final $S$ segment in the $LSRS$ path is removed and inserted before the $LSR$ subpath. Such an operation can be performed since the initial and final heading angles are equal. As the $LSRS$ path is modified to obtain the $SLSR$ path without changing any segment's parameter, the $LSRS$ and $SLSR$ paths have the same cost. The obtained $SLSR$ path from the $LSRS$ path is shown in Fig.~\ref{fig: LSRS_path} using dash-dotted lines.

\begin{figure}[htb!]
    \centering
    \includegraphics[width = 0.6\linewidth]{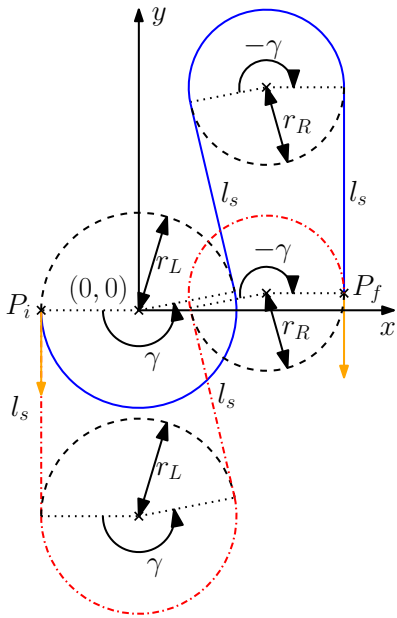}
    \caption{Configurations connected by an $LSRS$ path and an alternate $SLSR$ path parameterized using a single parameter}
    \label{fig: LSRS_path}
\end{figure}

Hence, from the initially considered non-trivial $SLSRSL$ path, an alternate non-trivial $SSLSRL$ path can be constructed to connect the same initial and final configurations at the same cost. However, using Corollary~\ref{corr: CC_non-optimality}, the non-trivial $SSLSRL$ path is non-optimal, as it contains a non-trivial $CC$ subpath. Therefore, the non-trivial $SLSRSL$ path is not optimal. By string reversal, a non-trivial $LSRSLS$ path is not optimal. Using a similar argument for a non-trivial $SRSLSR$ path, an alternate non-trivial $SSRSLR$ path can be constructed by replacing the $RSLS$ subpath with an $SRSL$ subpath. As the non-trivial $SSRSLR$ path is non-optimal using Corollary~\ref{corr: CC_non-optimality}, the non-trivial $SRSLSR$ path is not optimal. Therefore, a non-trivial $RSLSRS$ path is also not optimal. Hence, for $\lambda > e = 1,$ an optimal path contains at most five segments.
\end{proof}

\begin{lemma}
For $\lambda > e = 1,$ there exists optimal four-segment paths of the same cost as non-trivial $SLSRS$ and $SRSLS$ paths.
\end{lemma}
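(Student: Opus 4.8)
The plan is to reuse the straight-segment rearrangement that drove the preceding (six-segment non-optimality) lemma, but now to produce an equal-cost path rather than a contradiction. I would start from a non-trivial optimal $SLSRS$ path, written $S_{\ell_1}L_{\phi_L}S_{\ell_3}R_{\phi_R}S_{\ell_5}$. Since the $L$ arc is the middle segment of an $SLS$ subpath and the $R$ arc is the middle segment of an $SRS$ subpath, Lemma~\ref{lemma: angle_C_segment_lambda_greater_than_1} gives $\phi_L=\phi_R=2\pi-2\cos^{-1}(1/\lambda)$, and Lemma~\ref{lemma: length_S_segment_lambda_greater_than_1} (applied to the $LS_{\ell_3}R$ subpath) gives $\ell_3=(\mu_L+\mu_R)/\sqrt{\lambda^2-1}$. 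The point I would draw from this is that the $L_{\phi_L}S_{\ell_3}R_{\phi_R}$ portion induces a rigid planar motion with zero net rotation (because $\phi_L=\phi_R$): starting from any configuration with heading $\alpha$ it ends at the same heading $\alpha$, translating the position by a vector that depends only on $\alpha$ and on the fixed data $r_L,r_R,\mu_L,\mu_R,\lambda$.

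Next I would observe that, because the heading is constant on each straight segment, is raised by $\phi_L$ across $L$ and lowered by $\phi_R=\phi_L$ across $R$, the heading along both the leading $S_{\ell_1}$ and the trailing $S_{\ell_5}$ segments equals $\alpha_i=\alpha_f$; hence translation along $S_{\ell_1}$ and translation along $S_{\ell_5}$ are translations in the common direction $\hat u=(\cos\alpha_i,\sin\alpha_i)$. I would then define the candidate four-segment path $S_{\ell_1+\ell_5}L_{\phi_L}S_{\ell_3}R_{\phi_R}$, obtained by deleting the trailing straight segment and lengthening the leading one by $\ell_5$, and check that it connects the same configurations: both paths leave the heading at $\alpha_i$ and produce the total displacement $(\ell_1+\ell_5)\hat u$ plus the fixed displacement of the $LSR$ portion, since translations along $\hat u$ commute and the $LSR$ displacement is translation-invariant (it depends only on the starting heading $\alpha_i$, which is the same in both arrangements). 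Because $c_S=1$, both paths cost $c_S(\ell_1+\ell_3+\ell_5)+c_L\phi_L+c_R\phi_R$, so the four-segment path has the same cost as $SLSRS$ and is therefore also optimal; it is non-trivial, as $\ell_1+\ell_5>0$, $\ell_3>0$, and $\phi_L=\phi_R>0$, and its segment string is a cyclic permutation of $L,S_1,R,S_2$, consistent with Proposition~\ref{prop: optimal_path_sequence_lambda_greater_than_1}. The identical construction applied to a non-trivial optimal $SRSLS$ path yields an optimal $SRSL$ path of the same cost.

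I expect the only delicate step to be the configuration bookkeeping under the rearrangement, i.e.\ verifying that moving the trailing straight segment to the front does not change the final position or heading; this ultimately rests on the elementary fact that planar translations along a common direction commute, together with translation-invariance of the $LSR$ portion, so I do not anticipate genuine difficulty --- everything else follows directly from Lemmas~\ref{lemma: angle_C_segment_lambda_greater_than_1} and~\ref{lemma: length_S_segment_lambda_greater_than_1} and Proposition~\ref{prop: optimal_path_sequence_lambda_greater_than_1}.
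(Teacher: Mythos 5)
Your proposal is correct and follows essentially the same route as the paper: use Lemmas~\ref{lemma: angle_C_segment_lambda_greater_than_1} and~\ref{lemma: length_S_segment_lambda_greater_than_1} to conclude the $L$ and $R$ angles are equal (so the $LSR$ portion produces zero net rotation), then transfer the trailing $S$ segment to the leading one, yielding an $SLSR$ (resp.\ $SRSL$) path of identical cost between the same configurations. Your explicit bookkeeping via commuting translations along the common heading direction is just a more detailed statement of the paper's ``remove the final $S$ segment and insert it before the $LSR$ subpath'' step.
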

\begin{proof}
Consider a non-trivial optimal $SLSRS$ path. Using Lemmas~\ref{lemma: angle_C_segment_lambda_greater_than_1} and \ref{lemma: length_S_segment_lambda_greater_than_1}, the $LSR$ subpath is parameterized using $\lambda.$ Hence, the angles of the $L$ and $R$ segments are equal. Consider the $LSRS$ subpath of the $SLSRS$ path. The initial and final configurations connected by this $LSRS$ subpath can be represented similar to the representation shown in Fig.~\ref{fig: LSRS_path}. However, the length of the final $S$ segment is not parameterized using $\lambda,$ and is therefore not necessarily equal to the length of the initial $S$ segment. An alternate $SLSR$ path of the same cost connecting the same configurations can be constructed by removing the final $S$ segment in the $LSRS$ path and inserting it before the $LSR$ subpath. Therefore, from the initially considered non-trivial $SLSRS$ path, an alternate non-trivial $SSLSR$ path, which is the same as a non-trivial $SLSR$ path, can be constructed with the same cost connecting the same initial and final configurations.

Using a similar argument, from a non-trivial $SRSLS$ path, an alternate non-trivial $SSRSL$ path, which is the same as a non-trivial $SRSL$ path, can be constructed by replacing the $RSLS$ subpath with an $SRSL$ subpath. Both the $SRSLS$ path and the $SRSL$ path connect the same initial and final configurations at the same cost. Hence, the non-trivial $SLSRS$ and $SRSLS$ paths are redundant and need not be considered as candidate paths for the optimal solution.
\end{proof}

\begin{theorem}
There are at most $21$ candidate paths for the minimum cost path for the weighted Markov-Dubins problem, which are given in Table~\ref{tab: optimal_paths}.
\begin{table}[htb!]
    \centering
    \caption{List of candidate paths for the weighted Markov-Dubins problem}
    \label{tab: optimal_paths}
    \begin{tabular}{|c|c|c|}
    \hline
    \textbf{No. of segments} & \textbf{Candidate paths} & \textbf{No. of paths} \\ \hline
    $1$ & $S,$ $C$ & $3$ \\ \hline
    $2$ & $SC,$ $CS,$ $CC$ & $6$ \\ \hline
    $3$ & $CSC,$ $SCS$ & $6$ \\ \hline
    $4$ & $LSRS,$ $SRSL,$ $RSLS,$ $SLSR$ & $4$ \\ \hline
    $5$ & $LSRSL,$ $RSLSR$ & $2$ \\ \hline
    \end{tabular}
\end{table}
\end{theorem}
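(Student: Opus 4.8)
The plan is to obtain the list in Table~\ref{tab: optimal_paths} as the union of the candidate families established in the preceding subsections, and then to check that after removing the redundancies already identified exactly twenty-one distinct paths survive. The four families to be merged are: Case~1 ($e=0$), where the candidates are $C$ and $CC$ with each arc angle at most $\pi$ and $CCC$ is excluded by Theorem~\ref{theorem: CCC_nonoptimality}; Case~2.1 ($\lambda<e=1$), where the only candidate is $C$; Case~2.2 ($\lambda=e=1$), where the candidates are $LSL$, $RSR$ together with their degenerate forms $S,L,R,LS,SL,RS,SR$; and Case~2.3 ($\lambda>e=1$), where, by Proposition~\ref{prop: optimal_path_sequence_lambda_greater_than_1}, every candidate is a contiguous subword of the periodic word $\ldots L\,S_1\,R\,S_2\,L\,S_1\,R\,S_2\ldots$ of length at most five, the five-segment words $SLSRS$ and $SRSLS$ being redundant and $CC$ being non-optimal by Corollary~\ref{corr: CC_non-optimality}.

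First I would enumerate the Case~2.3 subwords by length: length one gives $S$ and $C$; length two gives $SC$ and $CS$ (never $CC$, since consecutive symbols of the periodic word alternate turn/straight); length three gives the $CSC$ words $LSR,RSL$ and the $SCS$ words $SLS,SRS$; length four gives $LSRS,SRSL,RSLS,SLSR$; and length five gives only $LSRSL$ and $RSLSR$. Next I would overlay the other cases: Case~2.2 contributes the two further $CSC$ words $LSL$ and $RSR$ (its degenerate forms $S,L,R,LS,SL,RS,SR$ are already present), Case~1 contributes the two $CC$ words $LR$ and $RL$ (its $C$ paths being already present), and Case~2.1 contributes nothing new. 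Hence the union is precisely $\{S,C\}$ in length one; $\{SC,CS,CC\}$ in length two; $\{CSC,SCS\}$ in length three, where $CSC$ now ranges over all four words $LSL,RSR,LSR,RSL$; $\{LSRS,SRSL,RSLS,SLSR\}$ in length four; and $\{LSRSL,RSLSR\}$ in length five.

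Then I would count, using that $C\in\{L,R\}$ contributes a factor of two in every turn slot while $S$ and the fully specified words contribute one each: $3$ for $\{S,L,R\}$; $6$ for $\{SL,SR,LS,RS,LR,RL\}$; $6$ for $\{LSL,RSR,LSR,RSL,SLS,SRS\}$; $4$ for $\{LSRS,SRSL,RSLS,SLSR\}$; and $2$ for $\{LSRSL,RSLSR\}$, for a total of $3+6+6+4+2=21$, which is the content of the table.

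The hard part will be purely the bookkeeping rather than any new idea: I must confirm that the $CSC$ class is exactly the union of its two sources ($LSR,RSL$ from $\lambda>e=1$ and $LSL,RSR$ from $\lambda=e=1$, which together are all four $CSC$ words and nothing more), that every degenerate path produced in Cases~2.1 and~2.2 coincides with a shorter word already listed so that the union does not grow, and that the exclusions invoked (of $CCC$ in Case~1, of $CC$ and of all six-segment words in Case~2.3, and of the redundant $SLSRS$ and $SRSLS$) together prevent any word outside the table from appearing. Once these checks are made, the count is immediate and the theorem follows.
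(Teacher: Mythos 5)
Your proposal is correct and follows essentially the same route as the paper, which states this theorem as a summary of the case analysis ($e=0$, $\lambda<1$, $\lambda=1$, $\lambda>1$) without spelling out the merge; your enumeration of subwords of the alternating sequence in the $\lambda>e=1$ case, the overlay of $LSL$, $RSR$, $LR$, $RL$ from the other cases, and the count $3+6+6+4+2=21$ is exactly the bookkeeping the paper leaves implicit.
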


\begin{remark}
For $\mu_L = \mu_R = 0,$ the list of candidate paths in Table~\ref{tab: optimal_paths} reduces to paths of type $CSC,$ $CCC,$ and degenerate paths of the same. This is because the length of an intermediate $S$ segment in an optimal four or five segment path tends to zero as $\mu_L, \mu_R \rightarrow 0$ from \eqref{eq: length_line_segment_of_subpath}. Hence, the weighted Markov-Dubins problem generalizes the standard Markov-Dubins problem \cite{Dubins} and the asymmetric sinistral/dextral Markov-Dubins problem \cite{sinistral/dextral}.
\end{remark}

\section{Results}

Given the list of candidate paths in Table~\ref{tab: optimal_paths}, each candidate path should be generated (if it exists) for given initial and final configurations, vehicle parameters, and penalties. For this purpose, the approach used in \cite{dubins_classification} is adapted to derive closed-form expressions for the parameters of a given path, which are the angles for each $L$ and $R$ segment and the length of an $S$ segment. The derivation of these expressions uses a corollary of Lemmas~\ref{lemma: angle_C_segment_lambda_greater_than_1} and \ref{lemma: length_S_segment_lambda_greater_than_1}, wherein each candidate four and five-segment paths are parameterized by at most three parameters. For example, for an optimal $LSRSL$ path, consider angles $\phi_1, \phi_2,$ and $\phi_3,$ for the first $L$ segment, the $R$ segment, and the final $L$ segment, respectively. For the optimal $LSRSL$ path, $\lambda$ can be expressed as a function of $\phi_2$ using Lemma~\ref{lemma: angle_C_segment_lambda_greater_than_1}. Since the length of the $S$ segments is a function of $\lambda, \mu_L,$ and $\mu_R$, due to Lemma~\ref{lemma: length_S_segment_lambda_greater_than_1}, it can be expressed as a function of $\phi_2, \mu_L,$ and $\mu_R.$ Hence, an optimal $LSRSL$ path is parameterized using three parameters.

Consider the initial and final position of the vehicle to be at the origin and the initial and final headings to be $0^\circ$ and $180^\circ$, respectively. Let $r_L = r_R = 1$ m. For the unweighted Markov-Dubins problem, the $LRL$ path was obtained to be optimal. However, this path is no longer optimal with the introduction of weights. For $\mu_L = \mu_R = 1,$ which corresponds to a $100\%$ penalty imposed on turns, an $LSRSL$ path was obtained to be optimal. The two cases are shown in Fig.~\ref{fig: comparison_LRL_LSRSL}. For the $LRL$ path, the angles of the $L$ and $R$ segments are $60^\circ$ and $300^\circ$, respectively. With the introduction of the penalties, the angles of the $L$ and $R$ segments are reduced to $32.53^\circ$ and $245.07^\circ,$ respectively, corresponding to the $LSRSL$ path due to the $S$ segments, which are of length $1.28$ m. Since an $S$ segment is of lower cost than $L$ and $R$ segments, which can be observed from the derivation of the costs of the $L, R,$ and $S$ segments in the proof of Theorem~\ref{theorem: CCC_nonoptimality}, the resultant $LSRSL$ path is of a lower cost compared to an $LRL$ path for the weighted case.

\begin{figure}[htb!]
    \centering
    \includegraphics[width = 0.9\linewidth]{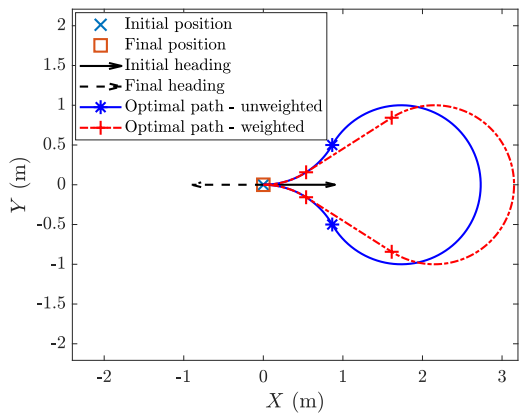}
    \caption{Comparison of optimal paths for $r_L = r_R = 1$ m without weights and with $\mu_L = \mu_R = 1$}
    \label{fig: comparison_LRL_LSRSL}
\end{figure}



\section{Conclusion}

In this paper, a variant of the classical Markov-Dubins problem, referred to as \textit{weighted Markov-Dubins problem}, is addressed. The considered problem addresses curvature-constrained planar least-cost paths connecting given initial and final configurations with different bounds of the curvature for sinistral and dextral turns, and penalties $\mu_L$ and $\mu_R$ associated with those turns, respectively. The proposed problem is solved using Pontryagin's minimum principle. A total of $21$ candidate least-cost paths were obtained, each comprising of at most five segments. Moreover, when $\mu_L = \mu_R = 0,$ the candidate paths reduce to paths of type $CSC$ and $CCC$ and degenerate paths of the same. Hence, the addressed weighted Markov-Dubins problem generalizes the classical Markov-Dubins problem and the asymmetric sinistral/dextral Markov-Dubins problem.

\section*{Acknowledgment}

The authors gratefully acknowledge Christopher Montez, Texas A\&M University, for useful discussions.

\bibliographystyle{IEEEtran}
\bibliography{IEEEabrv, cite}

\end{document}